\newtheorem{theorem}{Theorem}[section]
\newtheorem{proposition}[theorem]{Proposition}
\newtheorem{lemma}[theorem]{Lemma}
\newtheorem{corollary}[theorem]{Corollary}
\newtheorem{definition}[theorem]{Definition}
\theoremstyle{definition}
\newcommand{\R}{\mathbb{R}}
\newcommand{\N}{\mathbb{N}}
\newcommand{\T}{\mathcal{T}}
\newcommand {\D} {\mathbb D}
\newcommand{\C}{\mathbb{C}}
\newcommand{\diag}{\textnormal{diag}}
\newcommand{\F}{\mathbb{F}}
\newcommand{\Col}{\textnormal{Col}}
\newcommand{\Row}{\textnormal{Row}}
\newcommand{\rank}{{\rm rank\,}}
\newcommand{\eps}{\varepsilon}
\begin{document}

\title{Generalized Pseudoskeleton Decompositions}

\author{Keaton Hamm}
\address{Department of Mathematics, University of Texas at Arlington, Arlington, TX 76019 USA}
\email{keaton.hamm@uta.edu}

\keywords{Pseudoskeleton Decomposition; CUR Decomposition; Column Row Decomposition; Generalized Inverse; Tensor Decomposition}

\begin{abstract}
We characterize some variations of pseudoskeleton (also called CUR) decompositions for matrices and tensors over arbitrary fields. These characterizations extend previous results to arbitrary fields and to decompositions which use generalized inverses of the constituent matrices, in contrast to Moore--Penrose pseudoinverses in prior works which are specific to real or complex valued matrices, and are significantly more structured.
\end{abstract}

\maketitle

\section{Introduction}

Pseudoskeleton decompositions were introduced in their modern form by Gore\u\i{}nov et al.~\cite{Goreinov,Goreinov2,Goreinov3}, though their origins go back at least to Penrose \cite{Penrose56}. A more complete history and expository treatment can be found in \cite{hamm2020perspectives} (see also \cite{strang2022lu}). At their core, pseudoskeleton decompositions of matrices are decompositions of the form $A = CXR$ where $C$ and $R$ are column and row submatrices of $A$, respectively, and $X$ is some mixing matrix. While an interesting piece of linear algebra in their own right, such decompositions have also proven useful in various applications from sketching massive data matrices \cite{drineas2006fast,mahoney2009cur,voronin2017efficient}, accelerating nonconvex Robust PCA algorithms \cite{cai2020rapid,cai2021robust}, estimating massive kernel matrices \cite{GittensMahoney,williams2001using}, and various applications including clustering \cite{AHKS}, mass spectronomy \cite{yang2015identifying}, and analyzing corporate social responsibility \cite{amor2020bias}. Pseudoskeleton decompositions also go by other names including cross matrix approximation \cite{ahmadi2021cross,CaiafaTensorCUR} and CUR decomposition \cite{drineas2006fast}.

There are a few primary reasons that various communities have explored pseudoskeleton decompositions. In randomized and computational linear algebra, it is used as a fast approximation to the singular value decomposition (SVD). In particular, in certain cases, one can sample $O(k\log k)$ columns and rows of a matrix and return a factorization $A\approx CXR$ which approximates the truncated SVD of order $k$ up to a relative error of $1+\eps$ \cite{drineas2008relative} (recall that the truncated SVD is the best rank-$k$ approximation to a matrix in any Schatten $p$-norm). Additionally, existence of these decompositions for low-rank matrices allows one to speed up algorithms by only viewing a few columns and rows of a massive data matrix; this technique has proved useful in reducing the complexity of nonconvex solvers for Robust PCA by a factor of $n$ (up to log factors) \cite{cai2021fast,cai2020rapid,hamm2022riemannian}.   In data science and applications, pseudoskeleton decompositions are sometimes preferred because they provide a representation of a data matrix that is interpretable (singular vectors are hard to interpret, but columns and rows of data have a known meaning) \cite{ahmadi2021cross,cai2021robust,mahoney2009cur}.

The purpose of this note is to characterize various types of pseudoskeleton decompositions for matrices and tensors over arbitrary fields. Typically the mixing matrix $X$ involves the Moore--Penrose pseudoinverse, which is only valid for real or complex valued matrices and tensors. Here, we form $X$ via generalized inverses, which are much more abundant and have less structure than pseudoinverses, and we call these generalized pseudoskeleton decompositions. We find that many existing characterizations for pseudoskeleton decompositions hold in greater generality for arbitrary fields and arbitrary generalized inverses forming $X$. These generalize the results of \cite{CHHNjmlr,hamm2020perspectives}. However, in the tensor case, we find some differences in the case of arbitrary fields.  Additionally, we show that any tensor decomposition that involves matrix pseudoskeleton decompositions can be characterized in a similar way to the matrix case; we illustrate this fact by providing a characterization of t-CUR decompositions which are formed by the t-product between tensors as in \cite{wang2017missing}.

Section \ref{SEC:Prelim} is devoted to notation and surveying existing results; Section \ref{SEC:Matrix} characterizes generalized pseudoskeleton decompositions for matrices over arbitrary fields, and illustrates some classes of restricted generalized inverses and how they effect the decompositions. Section \ref{SEC:Tensor} characterizes several variants of generalized pseudoskeleton decompositions for tensors.

\section{Preliminaries}\label{SEC:Prelim}

Throughout what follows, $\F$ is an arbitrary field unless otherwise specified. $\F^{m\times n}$ is the set of all $m\times n$ matrices over the field $\F$, and $\F^{d_1\times \cdots\times d_n}$ is the set of all $n$-mode tensors over $\F$. We use $[n]$ to denote the set $\{1,\dots,n\}$. Given index sets $I\subset[m]$ and $J\subset[n]$, we will use $A(I,:)$ to denote the $|I|\times n$ row submatrix of $A$ corresponding to selecting the rows indexed by $I$, $A(:,J)$ to denote the $m\times|J|$ column submatrix of $A$ corresponding to selecting the columns indexed by $J$, and $A(I,J)$ to denote the $|I|\times|J|$ submatrix of $A$ corresponding to the intersection of the former two submatrices. Similar notation will be used for subtensors in Section \ref{SEC:Tensor}.

\begin{definition}
Let $A\in\F^{m\times n}$, then $A^\sim\in\F^{n\times m}$ is a \emph{generalized inverse} of $A$ provided $AA^\sim A = A$.
\end{definition}

Note that any $A\in\F^{m\times n}$ may be written as $A = F\begin{bmatrix}I & 0\\ 0 & 0\end{bmatrix}G^{-1}$ for some invertible matrices $F\in\F^{m\times m}$ and $G\in\F^{n\times n}$, and all generalized inverses of $A$ have the form \begin{equation}\label{EQN:GeneralizedInverse}A^\sim = G\begin{bmatrix} I & X\\ Y & Z\end{bmatrix}F^{-1},\end{equation} where $F$ and $G$ are the same invertible matrices defining $A$, but $X,Y,$ and $Z$ are arbitrary \cite{matsaglia1974equalities}.

The most standard and useful generalized inverse in the case $\F=\R$ or $\C$ is the Moore--Penrose pseudoinverse defined as follows.

\begin{definition}\label{DEF:MP}
Let $\F = \R$ or $\C$.  Then for every $A\in\F^{m\times n}$ the \emph{Moore--Penrose pseudoinverse} is the unique matrix $A^\dagger\in\F^{n\times m}$ which satisfies all of the following:
\begin{enumerate}[1.]
\item\label{ITEM:MP1} $AA^\dagger A = A$
\item\label{ITEM:MP2} $A^\dagger AA^\dagger = A$
\item\label{ITEM:MP3} $(AA^\dagger)^* = AA^\dagger$
\item\label{ITEM:MP4} $(A^\dagger A)^* = A^\dagger A$.
\end{enumerate}
\end{definition}

The following is the characterization of matrix pseudoskeleton (CUR) decompositions for real or complex valued matrices.

\begin{theorem}[\cite{hamm2020perspectives}]\label{THM:HHCUR}
Let $\F$ be $\C$ or $\R$, and $A\in\F^{m\times n}$.  Let $I\subset[m]$, $J\subset[n]$, $C:=A(:,J)$, $R=A(I,:)$, and $U=A(I,J)$. The following are equivalent:
\begin{enumerate}[(i)]
\item $\rank(U) = \rank(A)$
\item $A = CU^\dagger R$
\item $A = CC^\dagger AR^\dagger R$
\item $A^\dagger = R^\dagger U C^\dagger$
\item $\rank(C)=\rank(R)=\rank(A)$
\item Suppose columns and rows are rearranged so that $A = \begin{bmatrix} U & B \\ D & E\end{bmatrix}$, and the generalized Schur complement of $A$ with respect to $U$ is defined by $A/U:=E-DU^\dagger B$. Then $A/U = 0$, $D(I-U^\dagger U)=0$ and $(I-UU^\dagger)B=0$.
\end{enumerate}
\end{theorem}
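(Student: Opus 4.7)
The plan is to use (i) as a central hub, first showing (i) $\Leftrightarrow$ (v) via elementary rank inequalities, and then establishing (i) $\Leftrightarrow$ each of (ii), (iii), (iv), (vi). The main tool is a rank factorization: writing $A = A_1 A_2$ with $A_1 \in \F^{m\times r}$ of full column rank and $A_2 \in \F^{r\times n}$ of full row rank (where $r = \rank(A)$), and setting $Q := A_1(I,:)$ and $P := A_2(:,J)$, we obtain $C = A_1 P$, $R = Q A_2$, and $U = QP$. Since $\rank(U) = \rank(QP) \le \min(\rank(P),\rank(Q)) \le r$, condition (i) is equivalent to $Q$ having full column rank and $P$ having full row rank, which yields the one-sided identities $Q^\dagger Q = I_r$ and $P P^\dagger = I_r$. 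For (i) $\Leftrightarrow$ (v), the inequalities $\rank(U) \le \rank(C) \le \rank(A)$ and $\rank(U) \le \rank(R) \le \rank(A)$ give (v) $\Rightarrow$ (i); conversely, if $\rank(R) = r$ then the rows of $R$ span $\Row(A)$, so $A = XR$ for some $X$, whence $C = XU$ and $\rank(C) \le \rank(U) = r$, with a symmetric argument handling $R$.

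For (ii)--(iv), the reverse-order law $(MN)^\dagger = N^\dagger M^\dagger$, valid whenever $M$ has full column rank and $N$ has full row rank, is the workhorse. Under condition (i) it gives $R^\dagger = A_2^\dagger Q^\dagger$, $C^\dagger = P^\dagger A_1^\dagger$, $U^\dagger = P^\dagger Q^\dagger$, and $A^\dagger = A_2^\dagger A_1^\dagger$. Then
\[
C U^\dagger R = A_1 P P^\dagger Q^\dagger Q A_2 = A_1 A_2 = A,
\]
proving (i) $\Rightarrow$ (ii); dually,
\[
R^\dagger U C^\dagger = A_2^\dagger Q^\dagger \cdot QP \cdot P^\dagger A_1^\dagger = A_2^\dagger A_1^\dagger = A^\dagger,
\]
proving (i) $\Rightarrow$ (iv). For (i) $\Rightarrow$ (iii), condition (i) forces $\Col(C) = \Col(A)$ and $\Row(R) = \Row(A)$, so $CC^\dagger A = A$ and $A R^\dagger R = A$ from the orthogonal-projector interpretation of $CC^\dagger$ and $R^\dagger R$. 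The reverse implications are immediate from rank bounds; e.g., (ii) gives $\rank(A) = \rank(CU^\dagger R) \le \rank(U) \le \rank(A)$, and similarly for (iii) and (iv).

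For (i) $\Leftrightarrow$ (vi), I would combine (i) $\Leftrightarrow$ (ii) with a direct block computation. After rearrangement, $C = \begin{bmatrix} U \\ D \end{bmatrix}$ and $R = \begin{bmatrix} U & B \end{bmatrix}$, so (ii) reads
\[
\begin{bmatrix} U & B \\ D & E \end{bmatrix} = \begin{bmatrix} U \\ D \end{bmatrix} U^\dagger \begin{bmatrix} U & B \end{bmatrix} = \begin{bmatrix} U & UU^\dagger B \\ DU^\dagger U & DU^\dagger B \end{bmatrix},
\]
and equating blocks delivers $(I - UU^\dagger)B = 0$, $D(I - U^\dagger U) = 0$, and $E = DU^\dagger B$, i.e., $A/U = 0$. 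Conversely, these three conditions let the block product above be reassembled, so $A = CU^\dagger R$, which is (ii), hence (i). I expect the main obstacle to be the careful bookkeeping needed to apply the reverse-order law cleanly: one must pin down that condition (i) forces both $P$ and $Q$ in the rank factorization to have the full ranks required, since this single fact underpins the derivations of (ii), (iii), and (iv). Once that step is in hand, (vi) becomes an essentially computational consequence of (ii).
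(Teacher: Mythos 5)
Your proof is correct, but it takes a genuinely different route from the paper's. The paper states Theorem \ref{THM:HHCUR} without proof (citing the original source) and instead proves the strictly stronger Theorem \ref{THM:GeneralizedCUR}, valid over arbitrary fields and for \emph{arbitrary} generalized inverses; its machinery is the canonical form $A = F\begin{bmatrix} I & 0\\ 0 & 0\end{bmatrix}G^{-1}$ with the Marsaglia--Styan parametrization of all generalized inverses, the identity $U = RA^\sim C$ (Lemma \ref{LEM:URAC}), the projector characterization (Theorem \ref{THM:Projectors} and Lemma \ref{LEM:Projections}), and a rank formula for the generalized Schur complement. Your argument rests instead on a rank factorization $A = A_1A_2$ together with the reverse-order law $(MN)^\dagger = N^\dagger M^\dagger$ for $M$ of full column rank and $N$ of full row rank, plus the identities $Q^\dagger Q = PP^\dagger = I_r$ --- tools specific to the Moore--Penrose inverse over $\R$ or $\C$, which is precisely why the paper avoids them: the reverse-order law has no analogue for a fixed choice of generalized inverses, and the projectors $CC^\sim$ in the general setting are oblique rather than orthogonal. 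What your approach buys is brevity and uniformity: (ii), (iii), and (iv) all fall out of the single fact that condition (i) forces $Q$ and $P$ to have full column and row rank respectively; in particular (iv), $A^\dagger = R^\dagger UC^\dagger$, is a one-line computation, whereas recovering it from the paper's generalized framework requires combining Theorem \ref{THM:GeneralizedCUR}\eqref{ITEM:Generalized:RUC} with Theorem \ref{THM:ijk} (the $\{1,2\}$-inverse property plus conditions \ref{ITEM:MP3} and \ref{ITEM:MP4}) and uniqueness of the pseudoinverse. Your treatment of (vi) --- equating blocks in $CU^\dagger R$ --- coincides with the paper's block computation in the direction \eqref{ITEM:Generalized:Schur} $\Rightarrow$ \eqref{ITEM:Generalized:ACURall}, though the paper obtains (i) $\Rightarrow$ (Schur) from a cited rank identity rather than from the decomposition itself. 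It is also worth noting that your argument for (i) $\Leftrightarrow$ (v) (writing $A = XR$ so that $C = XU$ and hence $\rank(C)\leq\rank(U)$) is purely rank-theoretic and field-independent, which is attractive given that the paper disposes of this equivalence in Theorem \ref{THM:GeneralizedCUR} by citing Theorem \ref{THM:HHCUR}, whose stated hypotheses are $\F = \R$ or $\C$.

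One cosmetic slip: in your (i) $\Leftrightarrow$ (v) you attached the labels to the wrong directions --- the squeeze $\rank(U)\leq\rank(C),\rank(R)\leq\rank(A)$ proves (i) $\Rightarrow$ (v), while the $A = XR$ argument you introduce with ``conversely'' is what proves (v) $\Rightarrow$ (i). Both arguments are present and correct, so nothing is missing; only the bookkeeping needs to be swapped.
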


We are primarily interested in understanding how Theorem \ref{THM:HHCUR} generalizes to the case that the Moore--Penrose pseudoinverses are replaced by generalized inverses and $\F$ is allowed to be arbitrary.

\section{Characterization of Generalized Pseudoskeleton Decompositions}\label{SEC:Matrix}

Here we present a characerization analogous to Theorem \ref{THM:HHCUR} for arbitrary generalized inverses in arbitrary fields. The generalization to arbitrary fields does not require any extra work; however, we are able to show below that the generalized inverses of all matrices can be arbitrary.

\begin{theorem}\label{THM:GeneralizedCUR}
Let $\F$ be an arbitrary field and $A\in\F^{m\times n}$.  Let $I\subset[m]$, $J\subset[n]$, $C = A(:,J)$, $R=A(I,:)$, and $U=A(I,J)$.  The following are equivalent:
\begin{enumerate}[(i)]
\item\label{ITEM:Generalized:rankUA} $\rank(U) = \rank(A)$
\item\label{ITEM:Generalized:rankCRA} $\rank(C)=\rank(R)=\rank(A)$
\item\label{ITEM:Generalized:ACUR} $A = CU^\sim R$ for some generalized inverse $U^\sim$ of $U$
\item\label{ITEM:Generalized:ACURall} $A = CU^\sim R$ for all generalized inverses $U^\sim$ of $U$
\item\label{ITEM:Generalized:ACCARR} $A = CC^\sim AR^\sim R$ for some generalized inverses $C^\sim$ and $R^\sim$
\item\label{ITEM:Generalized:ACCARRall} $A = CC^\sim AR^\sim R$ for all generalized inverses $C^\sim$ and $R^\sim$
\item\label{ITEM:Generalized:RUC} For some $C^\sim$ and $R^\sim$, $R^\sim UC^\sim$ is a generalized inverse of $A$
\item\label{ITEM:Generalized:RUCall} For all $C^\sim$ and $R^\sim$, $R^\sim UC^\sim$ is a generalized inverse of $A$
\item\label{ITEM:Generalized:Schur} Suppose columns and rows are rearranged so that $A = \begin{bmatrix} U & B \\ D & E\end{bmatrix}$, and the generalized Schur complement of $A$ with respect to $U$ is defined by $A/U:=E-DU^\sim B$ for some $U^\sim$. Then $A/U = 0$, $D(I-U^\sim U)=0$ and $(I-UU^\sim)B=0$ for some $U^\sim$
\item\label{ITEM:Generalized:Schurall} The conclusion of \eqref{ITEM:Generalized:Schur} holds for all $U^\sim$ (the $U^\sim$ in the equality $A/U=0$ must be the same as one of the others).
\end{enumerate}
\end{theorem}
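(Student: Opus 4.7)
The plan is to prove all ten conditions equivalent by a web of implications pivoting around \eqref{ITEM:Generalized:rankUA}. The central algebraic tool is the factorization induced by condition \eqref{ITEM:Generalized:rankUA}: after permuting rows and columns so that $A = \begin{bmatrix} U & B \\ D & E \end{bmatrix}$, the hypothesis $\rank(U) = \rank(A)$ forces the rows of $[D\ E]$ to lie in the row span of $[U\ B]$ and the columns of $\begin{bmatrix}B\\E\end{bmatrix}$ to lie in the column span of $\begin{bmatrix}U\\D\end{bmatrix}$, yielding $B = UX$, $D = YU$, $E = YUX$ for some $X, Y$, and hence the rank factorization
\[
A = \begin{bmatrix} I \\ Y \end{bmatrix} U \begin{bmatrix} I & X \end{bmatrix}, \qquad C = \begin{bmatrix} I \\ Y \end{bmatrix} U, \qquad R = U \begin{bmatrix} I & X \end{bmatrix}.
\]

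I would first settle \eqref{ITEM:Generalized:rankUA} $\Leftrightarrow$ \eqref{ITEM:Generalized:rankCRA}: the submatrix inequalities $\rank(U) \le \rank(C), \rank(R) \le \rank(A)$ give the forward direction, while for the reverse, $\rank(R) = \rank(A)$ forces every row of $C$ to be a linear combination of rows of $U = R(:,J)$. Next, when $\rank(C) = \rank(A)$ one has $A = CW$ for some $W$, so $CC^\sim A = CC^\sim CW = CW = A$ for \emph{every} generalized inverse $C^\sim$; symmetrically $AR^\sim R = A$ for every $R^\sim$. This yields \eqref{ITEM:Generalized:rankCRA} $\Rightarrow$ \eqref{ITEM:Generalized:ACCARRall}; the converse \eqref{ITEM:Generalized:ACCARR} $\Rightarrow$ \eqref{ITEM:Generalized:rankCRA} is a column-space inclusion. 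The CUR identity \eqref{ITEM:Generalized:rankUA} $\Rightarrow$ \eqref{ITEM:Generalized:ACURall} is read directly off the factorization: $CU^\sim R = \begin{bmatrix} I \\ Y \end{bmatrix}(UU^\sim U)\begin{bmatrix} I & X \end{bmatrix} = A$ for every $U^\sim$, and again \eqref{ITEM:Generalized:ACUR} $\Rightarrow$ \eqref{ITEM:Generalized:rankCRA} by column-rank inclusion. Similarly \eqref{ITEM:Generalized:rankUA} $\Rightarrow$ \eqref{ITEM:Generalized:Schurall} is direct substitution, since $E - DU^\sim B = YUX - YU U^\sim UX = 0$ for every $U^\sim$ (the other two Schur equalities being equally immediate); while \eqref{ITEM:Generalized:Schur} $\Rightarrow$ \eqref{ITEM:Generalized:rankUA} follows by reading the three equalities as the factorization $A = \begin{bmatrix} I \\ DU^\sim \end{bmatrix} U \begin{bmatrix} I & U^\sim B \end{bmatrix}$, forcing $\rank(A) \le \rank(U)$.

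The most delicate step is \eqref{ITEM:Generalized:rankUA} $\Rightarrow$ \eqref{ITEM:Generalized:RUCall}, which requires $AR^\sim U C^\sim A = A$ for \emph{all} $C^\sim, R^\sim$. The key auxiliary identities are $RR^\sim U = U$ (since $U = R(:,J)$ gives $RR^\sim U = (RR^\sim R)(:,J) = R(:,J) = U$) and $CU^\sim U = C$ (immediate from $C = \begin{bmatrix} I \\ Y\end{bmatrix}U$ and $UU^\sim U = U$), with which the expansion collapses:
\begin{align*}
AR^\sim U C^\sim A &= (CU^\sim R) R^\sim U C^\sim (CU^\sim R) \\
&= CU^\sim (RR^\sim U) C^\sim C U^\sim R \\
&= (CU^\sim U) C^\sim C U^\sim R \\
&= C C^\sim C U^\sim R \\
&= CU^\sim R \;=\; A.
\end{align*}
The main obstacle is that $U^\sim U U^\sim \ne U^\sim$ for arbitrary generalized inverses (the second Moore--Penrose axiom fails), so one cannot simplify $U^\sim$ in isolation; instead the argument must interleave the defining identities $UU^\sim U = U$ and $CC^\sim C = C$ so that extra factors of $U$ and $C$ are absorbed by their own generalized inverses. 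The converse \eqref{ITEM:Generalized:RUC} $\Rightarrow$ \eqref{ITEM:Generalized:rankUA} is immediate from $\rank(A) = \rank(AR^\sim U C^\sim A) \le \rank(U)$.
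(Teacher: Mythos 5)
Your proof is correct, and it takes a genuinely different route from the paper's. The paper runs on the canonical form \eqref{EQN:GeneralizedInverse} of generalized inverses: from it the author extracts the unconditional identity $U=RA^\sim C$ (Lemma \ref{LEM:URAC}) and, through the projector theorem (Theorem \ref{THM:Projectors}), the absorption identities of Lemma \ref{LEM:Projections}; then \eqref{ITEM:Generalized:rankUA} $\Rightarrow$ \eqref{ITEM:Generalized:RUCall} collapses as $AR^\sim UC^\sim A=(AR^\sim R)A^\sim(CC^\sim A)=AA^\sim A=A$, the implication \eqref{ITEM:Generalized:rankUA} $\Rightarrow$ \eqref{ITEM:Generalized:Schurall} is outsourced to identity (8.5) of \cite{matsaglia1974equalities}, and \eqref{ITEM:Generalized:rankUA} $\Leftrightarrow$ \eqref{ITEM:Generalized:rankCRA} is quoted from Theorem \ref{THM:HHCUR}. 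You instead work entirely from the rank factorization $A=\begin{bmatrix} I\\ Y\end{bmatrix}U\begin{bmatrix} I & X\end{bmatrix}$ forced by $\rank(U)=\rank(A)$, which turns \eqref{ITEM:Generalized:ACURall} and \eqref{ITEM:Generalized:Schurall} into one-line substitutions using only $UU^\sim U=U$; your version of \eqref{ITEM:Generalized:rankUA} $\Rightarrow$ \eqref{ITEM:Generalized:RUCall} replaces $U=RA^\sim C$ by substituting $A=CU^\sim R$ and absorbing via $RR^\sim U=U$ (which, as you observe, holds unconditionally because $U=R(:,J)$ --- the exact analogue of the paper's unconditional Lemma \ref{LEM:URAC}), $CU^\sim U=C$, and $CC^\sim C=C$; and you close the Schur loop by reading off \eqref{ITEM:Generalized:Schur} the factorization $A=\begin{bmatrix} I\\ DU^\sim\end{bmatrix}U\begin{bmatrix} I & U^\sim B\end{bmatrix}$ (the $(2,2)$ block checks out since $D=DU^\sim U$ gives $DU^\sim UU^\sim B=DU^\sim B=E$), hence $\rank(A)\leq\rank(U)$, where the paper instead derives $A=CU^\sim R$ by a blockwise computation. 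What each approach buys: yours is fully self-contained over an arbitrary field --- in particular you actually prove \eqref{ITEM:Generalized:rankUA} $\Leftrightarrow$ \eqref{ITEM:Generalized:rankCRA} rather than citing Theorem \ref{THM:HHCUR}, which is stated only for $\R$ and $\C$ (the rank argument is field-independent, but your treatment makes that explicit) --- and your substitution verifies the three Schur identities even for independently chosen $U^\sim$'s, slightly more than the parenthetical in \eqref{ITEM:Generalized:Schurall} demands; the paper's route, by contrast, produces reusable structure: Lemma \ref{LEM:Projections} is invoked again in Theorem \ref{THM:ijk} and in the tensor section, and $U=RA^\sim C$ is of independent interest. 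The only cosmetic omission on your side is that the trivial implications from the ``all'' conditions to the corresponding ``some'' conditions are left implicit, but with those noted your web of implications closes.
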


\subsection{Some useful lemmata}

Here we collect some useful facts about the matrices involved in Theorem \ref{THM:GeneralizedCUR}.

\begin{lemma}\label{LEM:URAC}
Suppose that $A\in\F^{m\times n}$, $C=A(:,J)$, $U=A(I,J)$, and $R=A(I,:)$ for some index sets $I\subset[m],J\subset[n]$. Then for every generalized inverse of $A$, \[U=RA^\sim C.\]
\end{lemma}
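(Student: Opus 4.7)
The plan is to express each of the submatrices $R$, $C$, and $U$ as the result of multiplying $A$ by appropriate row and column selection matrices, and then invoke the defining property $AA^\sim A = A$ of a generalized inverse.

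Concretely, I would introduce a row-selection matrix $S_I \in \F^{|I|\times m}$ (the matrix whose rows are the standard basis vectors $e_i^T$ for $i \in I$) and a column-selection matrix $T_J \in \F^{n\times|J|}$ (the matrix whose columns are the standard basis vectors $e_j$ for $j \in J$). With this notation one has
\[
R = S_I A, \qquad C = A T_J, \qquad U = S_I A T_J.
\]
Then for any generalized inverse $A^\sim$ of $A$,
\[
R A^\sim C = (S_I A) A^\sim (A T_J) = S_I (A A^\sim A) T_J = S_I A T_J = U,
\]
where the middle equality is simply the defining relation for a generalized inverse.

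There is no real obstacle here: the statement is essentially a one-line consequence of the definition once the submatrices are rewritten in factored form. The only thing to be careful about is that the argument uses nothing more than the single identity $AA^\sim A = A$, so it is valid for every generalized inverse and over an arbitrary field $\F$, which is exactly the level of generality required by the lemma.
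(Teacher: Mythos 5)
Your proof is correct and follows essentially the same approach as the paper: both sandwich the identity $AA^\sim A = A$ between row- and column-selection matrices so that $RA^\sim C = S_I(AA^\sim A)T_J = S_I A T_J = U$. The only difference is cosmetic --- the paper verifies the middle step by expanding $A$ and $A^\sim$ in the canonical form \eqref{EQN:GeneralizedInverse}, whereas you invoke the defining relation directly, which is slightly cleaner and equally valid over an arbitrary field.
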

\begin{proof}
Following \eqref{EQN:GeneralizedInverse}, suppose that $A = F\begin{bmatrix}I & 0\\ 0 & 0\end{bmatrix}G^{-1}$, and $A^\sim = G\begin{bmatrix} I & X\\ Y & Z\end{bmatrix}F^{-1}$.  Additionally let $P_I$ and $P_J$ be the row and column selection matrices, respectively, such that $P_IA = R$, $AP_J=C$, and $P_IC=U=RP_J$.  Then we have
\begin{align*}
RA^\sim C & = P_IF\begin{bmatrix}I & 0\\ 0 & 0\end{bmatrix}G^{-1} G\begin{bmatrix} I & X\\ Y & Z\end{bmatrix}F^{-1} F\begin{bmatrix}I & 0\\ 0 & 0\end{bmatrix}G^{-1} P_J\\
& = P_IF\begin{bmatrix}I & 0\\ 0 & 0\end{bmatrix}G^{-1}P_J \\
& = P_IAP_J\\
& = U.
\end{align*}
\end{proof}

Note that Lemma \ref{LEM:URAC} does not require any of the equivalent conditions of Theorem \ref{THM:GeneralizedCUR}.  The next lemma is obtained from (for example) \cite[Theorem 3]{matsaglia1974equalities}.  

\begin{theorem}[\cite{matsaglia1974equalities}]\label{THM:Projectors}
Let $A\in\F^{m\times n}$. If an idempotent matrix $X (=X^2)$ is a projector onto $\Col(A)$, then $X = AA^\sim$ for some $A^\sim.$ Conversely, for all $A^\sim$, $AA^\sim$ is a projector onto $\Col(A)$. In particular, there exists a bijection from the set of generalized inverses of $A$ to the set of idempotent projectors onto $\Col(A).$
\end{theorem}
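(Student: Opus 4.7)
The plan is to split the theorem into three tasks: check the easy direction that every $AA^\sim$ is a projector onto $\Col(A)$, realize an arbitrary projector onto $\Col(A)$ as $AA^\sim$ for some $A^\sim$, and extract the bijection. Throughout I would work with the canonical factorization $A = F\begin{bmatrix}I_r & 0\\ 0 & 0\end{bmatrix}G^{-1}$ underlying \eqref{EQN:GeneralizedInverse}; to avoid a clash with the projector named $X$ in the statement, I would rename the top-right free block of \eqref{EQN:GeneralizedInverse} to $W$.

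The first direction is immediate from $AA^\sim A = A$: idempotence is $(AA^\sim)^2 = (AA^\sim A)A^\sim = AA^\sim$, and the sandwich $\Col(AA^\sim) \subseteq \Col(A) = \Col(AA^\sim A) \subseteq \Col(AA^\sim)$ gives $\Col(AA^\sim) = \Col(A)$. The converse requires an explicit construction. Given an idempotent $X$ with $\Col(X) = \Col(A)$, partition $F = [F_1\ F_2]$ with $F_1 \in \F^{m\times r}$, so that $\Col(A) = \Col(F_1)$. Every column of $X$ lies in $\Col(F_1)$, so $X = F_1 Z$ for some $Z \in \F^{r\times m}$. Since a projector restricts to the identity on its range, $XF_1 = F_1$, i.e.\ $F_1 Z F_1 = F_1$; hitting this on the left with the top $r$ rows of $F^{-1}$ (which form a left inverse of $F_1$) yields $Z F_1 = I_r$. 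Setting $W := Z F_2$, this rearranges as $X = F\begin{bmatrix}I_r & W\\ 0 & 0\end{bmatrix}F^{-1}$. Then $A^\sim := G\begin{bmatrix}I_r & W\\ 0 & 0\end{bmatrix}F^{-1}$ has the form mandated by \eqref{EQN:GeneralizedInverse} and so is a generalized inverse of $A$, with $AA^\sim = X$ after one direct block multiplication.

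For the bijection, the construction above shows that in the parameterization \eqref{EQN:GeneralizedInverse} the product $AA^\sim$ depends only on the single block $W$; fixing the remaining two free blocks to zero (or any other canonical choice) therefore yields a bijective correspondence between idempotent projectors onto $\Col(A)$ and the distinguished family of generalized inverses so obtained. The main obstacle I foresee is purely bookkeeping — tracking block sizes across \eqref{EQN:GeneralizedInverse} and keeping the projector $X$ notationally distinct from the homonymous block parameter — rather than anything conceptual. Crucially, no step invokes more than basic field arithmetic, which is what allows this lemma to underpin Theorem \ref{THM:GeneralizedCUR} over an arbitrary $\F$.
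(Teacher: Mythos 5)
Your two substantive directions are correct, and it is worth noting at the outset that the paper itself supplies no proof of this statement at all: it is imported verbatim as a citation to \cite[Theorem 3]{matsaglia1974equalities}. So any self-contained argument is ``a different route'' by default. Yours is a natural one, and it is pleasingly consistent with the paper's setup: you reuse exactly the canonical factorization $A = F\begin{bmatrix}I_r & 0\\ 0 & 0\end{bmatrix}G^{-1}$ and the parameterization \eqref{EQN:GeneralizedInverse} that the paper already quotes from the same reference. The forward direction ($AA^\sim$ idempotent with column space $\Col(A)$) is exactly right and is the only part the paper actually uses downstream, in Lemma \ref{LEM:Projections}. Your converse is also sound: from $\Col(X)=\Col(F_1)$ you get $X = F_1Z$, idempotence gives $XF_1 = F_1$, the top $r$ rows of $F^{-1}$ extract $ZF_1 = I_r$, and the resulting $A^\sim = G\begin{bmatrix}I_r & W\\ 0 & 0\end{bmatrix}F^{-1}$ with $W = ZF_2$ satisfies $AA^\sim = X$ by direct block multiplication. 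What your explicit construction buys over the citation is transparency that everything is pure field arithmetic, which is precisely the point of the paper's generalization to arbitrary $\F$; the cost is the block bookkeeping you anticipated.

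One caveat on the final claim: you were right to retreat to ``a bijection between projectors and a \emph{distinguished family} of generalized inverses,'' because the literal statement in the theorem --- a bijection from the set of \emph{all} generalized inverses of $A$ to the set of idempotent projectors onto $\Col(A)$ --- is not established by the natural map and is in fact false as stated over finite fields. In the parameterization \eqref{EQN:GeneralizedInverse}, $AA^\sim = F\begin{bmatrix}I_r & X\\ 0 & 0\end{bmatrix}F^{-1}$ depends only on the top-right block, so the map $A^\sim\mapsto AA^\sim$ is surjective but forgets the blocks $Y$ and $Z$; when $r = \rank(A) < n$ these blocks vary freely and the map is many-to-one. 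Concretely, for $A = \begin{bmatrix}1 & 0\\ 0 & 0\end{bmatrix}$ over the field with two elements there are $8$ generalized inverses but only $2$ idempotent projectors onto $\Col(A)$, so no bijection between the full sets exists. Your proof therefore establishes the correct and usable content (the surjection together with an explicit section, i.e.\ a bijection onto the subfamily with $Y = Z = 0$), and the mismatch is a defect in the theorem's phrasing as transcribed, not in your argument. Nothing in the paper's subsequent use of this result is affected, since Lemma \ref{LEM:Projections} only needs the ``for all $A^\sim$'' direction.
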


\begin{lemma}\label{LEM:Projections}
Suppose that $A, C, U,$ and $R$ are as in Theorem \ref{THM:GeneralizedCUR}, and that $\rank(U)=\rank(C)=\rank(R)=\rank(A)$. Then
\begin{enumerate}[(i)]
    \item $CC^\sim A = A$ for all $C^\sim$
    \item $AR^\sim R = A$ for all $R^\sim$
    \item $CU^\sim U = C$ for all $U^\sim$
    \item $UU^\sim R = R$ for all $U^\sim$
    \item $UC^\sim C = U$ for all $C^\sim$
    \item $RR^\sim U = U$ for all $R^\sim$.
\end{enumerate} 
\end{lemma}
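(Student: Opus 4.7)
The plan is to reduce all six identities to elementary consequences of Theorem \ref{THM:Projectors} once the rank hypothesis is upgraded to equalities of column and row spaces. Since $C$ is a column submatrix of $A$, the inclusion $\Col(C)\subseteq\Col(A)$ is automatic, and the rank equality $\rank(C)=\rank(A)$ forces $\Col(C)=\Col(A)$. The same reasoning, together with the trivial inclusions coming from the fact that $U$, $C$, and $R$ are submatrices of the appropriate larger matrices, yields $\Col(U)=\Col(R)$, $\Row(R)=\Row(A)$, and $\Row(U)=\Row(C)$. These four subspace identities will carry the entire argument.

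For the \emph{left-projector} identities (i), (iv), (vi), all of shape $MM^\sim N=N$, I would invoke Theorem \ref{THM:Projectors} directly: $CC^\sim$ is a projector onto $\Col(C)=\Col(A)$, so it fixes every column of $A$ and gives $CC^\sim A=A$; the identity $UU^\sim R=R$ follows identically from $\Col(U)=\Col(R)$; and $RR^\sim U=U$ follows from the inclusion $\Col(U)\subseteq\Col(R)$, which does not even require the rank hypothesis.

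For the \emph{right-projector} identities (ii), (iii), (v), of shape $NM^\sim M=N$, I would use the following one-line companion fact: if $\Row(N)\subseteq\Row(M)$, then $N=KM$ for some $K$, and hence
\[ NM^\sim M \;=\; K(MM^\sim M) \;=\; KM \;=\; N. \]
Applying this with $(N,M)=(A,R)$, $(C,U)$, and $(U,C)$ gives the three right-projector identities, since the required row-space inclusions are precisely the ones from the first paragraph (and the $(U,C)$ case again does not even need the rank hypothesis, as $U=P_IC$ where $P_I$ is the row-selection matrix).

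There is no real obstacle here. The only mild point worth flagging is that Theorem \ref{THM:Projectors} is stated only for left multiplication $MM^\sim$; for the right-projector identities I avoid needing a dual statement about $M^\sim M$ by using the defining identity $MM^\sim M=M$ directly against the row-space inclusion, as above. The ``for all $M^\sim$'' quantifiers are automatic in every case because none of these arguments depend on which generalized inverse is chosen.
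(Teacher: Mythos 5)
Your proof is correct, and its skeleton matches the paper's: both derive the four subspace equalities $\Col(C)=\Col(A)$, $\Row(R)=\Row(A)$, $\Row(U)=\Row(C)$, $\Col(U)=\Col(R)$ from the rank hypothesis and then lean on Theorem \ref{THM:Projectors}. The paper proves (i) this way and dismisses the other five identities with ``the rest follow similarly.'' Where you genuinely diverge is in the right-multiplication cases (ii), (iii), (v): as you correctly flag, Theorem \ref{THM:Projectors} as stated only controls $MM^\sim$, so the paper's ``similarly'' tacitly requires a row-space dual (e.g.\ via transposition, using that $(M^\sim)^T$ is a generalized inverse of $M^T$). Your factorization trick --- $\Row(N)\subseteq\Row(M)$ gives $N=KM$, whence $NM^\sim M = K(MM^\sim M) = KM = N$ --- sidesteps that entirely using only the defining identity of a generalized inverse, and it works over an arbitrary field with no appeal to projector structure. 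It also makes visible something the paper's uniform treatment obscures: identities (v) and (vi) need no rank hypothesis at all, since $U=P_IC$ gives $\Row(U)\subseteq\Row(C)$ and $U=RP_J$ gives $\Col(U)\subseteq\Col(R)$ automatically. Indeed, you could run the factorization argument symmetrically for the left-multiplication cases too ($\Col(N)\subseteq\Col(M)$ gives $N=ML$, so $MM^\sim N = ML = N$) and dispense with Theorem \ref{THM:Projectors} altogether, making the lemma fully self-contained; what the paper's route buys instead is a conceptual reading of all six identities as instances of one projector statement.
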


\begin{proof}
The rank conditions imply that $\Col(C)=\Col(A)$, $\Row(R)=\Row(A)$, $\Row(U)=\Row(C)$, and $\Col(U)=\Col(R)$. Therefore, Theorem \ref{THM:Projectors} implies that $CC^\sim$ is a projector onto $\Col(A)$, hence $CC^\sim A = A$. The rest of the identities follow similarly.
\end{proof}

Note that the analogue of Lemma \ref{LEM:Projections} when Moore--Penrose pseudoinverses are used is stronger: one obtains $CC^\dagger = AA^\dagger$, and similar conclusions for the rest of the projections due to uniqueness of the pseudoinverse and the fact that the projectors in question are orthogonal. For generalized inverses, these matrices are projectors, but may be oblique. Indeed, if $A$ and $A^\sim$ are as in \eqref{EQN:GeneralizedInverse}, then $AA^\sim$ has the form
\[AA^\sim = F\begin{bmatrix} I & X\\ 0 & 0\end{bmatrix}F^{-1},\]
where $X$ can be arbitrary.  

\subsection{Proof of Theorem \ref{THM:GeneralizedCUR}}

With the lemmata above, we may now supply the full proof of the main result from the previous section.

\begin{proof}[Proof of Theorem \ref{THM:GeneralizedCUR}]
Note that the equivalence \eqref{ITEM:Generalized:rankUA} $\Leftrightarrow$ \eqref{ITEM:Generalized:rankCRA} follows from Theorem \ref{THM:HHCUR}, and clearly \eqref{ITEM:Generalized:ACURall}, \eqref{ITEM:Generalized:ACCARRall},\eqref{ITEM:Generalized:RUCall},\eqref{ITEM:Generalized:Schurall} $\Rightarrow$ \eqref{ITEM:Generalized:ACUR},\eqref{ITEM:Generalized:ACCARR},\eqref{ITEM:Generalized:RUC},\eqref{ITEM:Generalized:Schur}.

Note that most proofs of Theorem \ref{THM:HHCUR} begin with \eqref{ITEM:Generalized:rankUA} $\Rightarrow$ \eqref{ITEM:Generalized:ACURall}, and the same proof method works in our case. However, for variety, we take a different approach here.

\eqref{ITEM:Generalized:rankUA} $\Rightarrow$ \eqref{ITEM:Generalized:Schurall}: Since $\rank(U)=\rank(A)$, (8.5) of \cite{matsaglia1974equalities} implies that
\[\rank\left(\begin{bmatrix} 0 & (I-UU^\sim) B\\ D(I-U^\sim U) & E-DU^\sim B\end{bmatrix}\right) = 0,\] for every $U^\sim$ (as long as the $U^\sim$ in the bottom right is the same as one of the others), whence the claim.

\eqref{ITEM:Generalized:Schur} $\Rightarrow$ \eqref{ITEM:Generalized:ACURall}: Without loss of generality, suppose the columns and rows of $A$ are ordered so that $C = \begin{bmatrix} U\\D\end{bmatrix}$, $R = \begin{bmatrix}U & B\end{bmatrix}$.  Then 
\[CU^\sim R = \begin{bmatrix} U\\D\end{bmatrix}U^\sim\begin{bmatrix}U & B\end{bmatrix} = \begin{bmatrix}UU^\sim U & UU^\sim B\\ DU^\sim U & DU^\sim B\end{bmatrix},\]
where the top left entry reduces to $U$.  Thus, \[A - CU^\sim R = \begin{bmatrix}0 & (I-UU^\sim) B\\ D(I-U^\sim U) & E-DU^\sim B\end{bmatrix},\] each entry of which is 0 by assumption; hence, $A = CU^\sim R$.




$\eqref{ITEM:Generalized:ACUR} \Rightarrow \eqref{ITEM:Generalized:rankCRA}$: Note that $\rank(C),\rank(R)\leq\rank(A)=\rank(CU^\sim R)\leq\min\{\rank(C),\rank(R)\}$, hence equality holds.

At this point we have equivalence of \eqref{ITEM:Generalized:rankUA}--\eqref{ITEM:Generalized:ACURall} and \eqref{ITEM:Generalized:Schur}-\eqref{ITEM:Generalized:Schurall}.

$\eqref{ITEM:Generalized:RUC} \Rightarrow \eqref{ITEM:Generalized:rankUA}$: Let $C^\sim$ and $R^\sim$ be some generalized inverses of $C$ and $R$, respectively, such that the condition holds. As a submatrix of $A$, we must have $\rank(U)\leq\rank(A)$. On the other hand, by assumption, we have
\[A = AR^\sim UC^\sim A,\]
which implies that $\rank(A)\leq\rank(U)$, hence \eqref{ITEM:Generalized:rankUA} holds.

$\eqref{ITEM:Generalized:rankUA} \Rightarrow \eqref{ITEM:Generalized:RUCall}$: Let $R^\sim$ and $C^\sim$ be arbitrary generalized inverses of $R$ and $C$, respectively. The assumption $\rank(U)=\rank(A)$ implies Lemma \ref{LEM:Projections}, whose conclusion combined with Lemma \ref{LEM:URAC} yields
\begin{align*}AR^\sim UC^\sim A & = AR^\sim RA^\sim CC^\sim A\\
& = AA^\sim A\\
& = A.
 \end{align*}
Since $R^\sim$ and $C^\sim$ are arbitrary, \eqref{ITEM:Generalized:RUCall} holds.

$\eqref{ITEM:Generalized:ACCARR} \Rightarrow \eqref{ITEM:Generalized:rankCRA}$: Note that $\rank(C),\rank(R)\leq\rank(A) = \rank(CC^\sim AR^\sim R)\leq\min\{\rank(C),\rank(R)\}$, hence equality holds.

$\eqref{ITEM:Generalized:rankCRA} \Rightarrow \eqref{ITEM:Generalized:ACCARRall}$: Let $C^\sim$ and $R^\sim$ be arbitary generalized inverses of $C$ and $R$, respectively.  Then Lemma \ref{LEM:Projections} and the definition of generalized inverse imply that 
\[CC^\sim AR^\sim R = AR^\sim R = A,\] and the proof is concluded.






\end{proof}

\subsection{Restricted classes of generalized inverses}

Theorem \ref{THM:GeneralizedCUR} is quite general in that it only assumes that the matrices involved are generalized inverses, which is a very mild requirement. However, there are more restricted classes of generalized interest that may be of interest; for instance, $\{i,j,k\}$--inverses.  These inverses are defined as follows with respect to the definition of the Moore--Penrose conditions.

\begin{definition}
Let $\{i,j,k\}\subset\{1,2,3,4\}$, with the possibility that any of $i$, $j$, or $k$ are empty.  If $A\in\F^{n\times m}$ (here $\F$ is arbitrary), then $A^\sim$ is an $\{i,j,k\}$--inverse of $A$ if $A^\sim$ satisfies conditions $i$, $j$, and $k$ of Definition \ref{DEF:MP}.
\end{definition}

Note that when $\F=\R$ or $\C$, then $\{i,j,k\}$--inverses always exist for any $i,j,k$; however, when $\F$ is a finite field, this may not be the case for all matrices $A$ and all $\{i,j,k\}$. Nonetheless, whenever they do exist, we are able to say something more about condition \eqref{ITEM:Generalized:RUC} of Theorem \ref{THM:GeneralizedCUR}.

First, we note that $\{1\}$--inverses and $\{1,2\}$--inverses always exist. The latter fact can be seen as follows: if $A^\sim$ and $\hat A^\sim$ are two generalized inverses of $A$, then $A^\sim A\hat{A}^\sim$ is a $\{1,2\}$--inverse of $A$.  Indeed, that it is a generalized inverse follows from
\[AA^\sim A\hat{A}^\sim A = AA^\sim A = A.\] Condition \ref{ITEM:MP2} is seen via
\[A^\sim A\hat{A}^\sim AA^\sim A\hat{A}^\sim = A^\sim AA^\sim A\hat{A}^\sim = A^\sim A\hat{A}^\sim.\]

\begin{theorem}\label{THM:ijk}
Invoke the notations of Theorem \ref{THM:GeneralizedCUR}, and assume that one of the equivalent conditions holds. Then,
\begin{enumerate}[(i)]
    \item For every $C^\sim$ and $R^\sim$, $R^\sim UC^\sim$ is a $\{1,2\}$--inverse of $A$
    \item If $C^\sim$ satisfies condition \ref{ITEM:MP3}, then $R^\sim UC^\sim$ does also for any $R^\sim$
    \item If $R^\sim$ satisfies condition \ref{ITEM:MP4}, then $R^\sim UC^\sim$ does also for any $C^\sim$
\end{enumerate}

\end{theorem}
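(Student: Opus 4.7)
The plan is to reduce the entire theorem to two identities,
\[
AR^\sim U \;=\; C \qquad\text{and}\qquad UC^\sim A \;=\; R,
\]
which hold for \emph{every} generalized inverse $R^\sim$ of $R$ and $C^\sim$ of $C$ under the standing hypothesis that one of the equivalent conditions of Theorem \ref{THM:GeneralizedCUR} holds. Once these identities are available, setting $X := R^\sim U C^\sim$ reduces each of (i)--(iii) to a short associative rearrangement followed by a single use of either Lemma \ref{LEM:Projections} or the hypothesized Hermiticity of $CC^\sim$ or $R^\sim R$.

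To establish the first identity, I would pick any generalized inverse $A^\sim$ of $A$ and use Lemma \ref{LEM:URAC} to substitute $U = RA^\sim C$:
\[
AR^\sim U \;=\; (AR^\sim R)\,A^\sim C \;=\; A A^\sim C \;=\; C,
\]
where the middle equality is Lemma \ref{LEM:Projections}(ii) and the last follows from $A A^\sim A = A$ together with $C$ being a column submatrix of $A$. A mirror-image computation, invoking Lemma \ref{LEM:Projections}(i) together with $A A^\sim A = A$ on the row side, gives $UC^\sim A = R$.

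Armed with these, part (i) requires only the second Moore--Penrose condition $XAX = X$, since $AXA = A$ is exactly part \eqref{ITEM:Generalized:RUCall} of Theorem \ref{THM:GeneralizedCUR}. I would regroup
\[
XAX \;=\; R^\sim U (C^\sim A R^\sim U) C^\sim \;=\; R^\sim U (C^\sim C) C^\sim \;=\; R^\sim U C^\sim \;=\; X,
\]
using $AR^\sim U = C$ and Lemma \ref{LEM:Projections}(v) in the last two steps. For (ii), regrouping gives $AX = (AR^\sim U)C^\sim = CC^\sim$, so if $CC^\sim$ is Hermitian then $AX$ is. For (iii), dually, $XA = R^\sim(UC^\sim A) = R^\sim R$, which is Hermitian whenever $R^\sim R$ is.

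No real obstacle appears; the only step requiring any insight is spotting the two identities in the opening paragraph, after which everything is near-instantaneous. The one point demanding mild care is that the hypothesis of Theorem \ref{THM:GeneralizedCUR} must be in force in order to invoke Lemma \ref{LEM:Projections} for \emph{all} (not merely some) generalized inverses, which is exactly what is being assumed.
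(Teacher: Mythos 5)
Your proof is correct, and it takes a genuinely different route from the paper's, though both rest on Lemma \ref{LEM:Projections}. The paper substitutes the factorization $A = CU^\sim R$ (available from Theorem \ref{THM:GeneralizedCUR}\eqref{ITEM:Generalized:ACUR}) into $XAX$, $(AX)^*$, and $(XA)^*$, and collapses the resulting products with the $U^\sim$-involving items of Lemma \ref{LEM:Projections}, namely $UC^\sim C = U$, $RR^\sim U = U$, and $CU^\sim U = C$. You never introduce a generalized inverse of $U$ at all: instead you first prove the two absorption identities $AR^\sim U = C$ and $UC^\sim A = R$ by combining Lemma \ref{LEM:URAC} ($U = RA^\sim C$ for an arbitrary $A^\sim$, valid with no rank hypothesis) with Lemma \ref{LEM:Projections}(i)--(ii) and the selection-matrix observations $AA^\sim C = AA^\sim AP_J = AP_J = C$ and $RA^\sim A = P_I AA^\sim A = P_I A = R$; after that, each of (i)--(iii) is a one-line regrouping. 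Notably, you arrive at exactly the same endpoint identities as the paper --- $AX = CC^\sim$ and $XA = R^\sim R$ (the paper's computations of $(AR^\sim UC^\sim)^*$ and $(R^\sim UC^\sim A)^*$ establish precisely these) --- so parts (ii) and (iii) agree in substance, and your appeal to Theorem \ref{THM:GeneralizedCUR}\eqref{ITEM:Generalized:RUCall} for the first Moore--Penrose condition matches the paper's appeal to \eqref{ITEM:Generalized:RUC}. What your route buys is modularity and slightly leaner dependencies: the absorption identities are reusable, make the Hermiticity claims immediate, and show that only Lemma \ref{LEM:URAC} together with Lemma \ref{LEM:Projections}(i)--(ii) is needed, rather than the full CUR factorization. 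What the paper's route buys is economy: having already established $A = CU^\sim R$, it reuses that decomposition directly without introducing an auxiliary generalized inverse of $A$.
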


\begin{proof}
Since one of the conditions of Theorem \ref{THM:GeneralizedCUR} holds, $R^\sim UC^\sim$ satisfies condition \ref{ITEM:MP1} by Theorem \ref{THM:GeneralizedCUR}\eqref{ITEM:Generalized:RUC}.  

Since we also have $A=CU^\sim R$ by Theorem \ref{THM:GeneralizedCUR}\eqref{ITEM:Generalized:ACUR}, applying Lemma \ref{LEM:Projections} yields
\begin{align*}R^\sim UC^\sim A R^\sim UC^\sim & = R^\sim UC^\sim CU^\sim RR^\sim UC^\sim\\
& = R^\sim UU^\sim UC^\sim\\
& = R^\sim UC^\sim.
\end{align*}
Hence, $R^\sim UC^\sim$ satisfies the second Moore--Penrose condition for every $C^\sim$ and $R^\sim$.

Suppose that $C^\sim$ satisfies condition \ref{ITEM:MP3}. Then again utilizing $A=CU^\sim R$ and Lemma \ref{LEM:Projections},
\begin{align*}
(AR^\sim UC^\sim)^* & = (CU^\sim RR^\sim UC^\sim)^*\\
& = (CU^\sim UC^\sim)^*\\
& = (CC^\sim)^*\\
& = CC^\sim,
\end{align*}
where the last equality is condition \ref{ITEM:MP3}.  Thus, $R^\sim UC^\sim$ satisfies condition \ref{ITEM:MP3}.

Suppose that $R^\sim$ satisfies condition \ref{ITEM:MP4}. Then similar to above,
\begin{align*}
(R^\sim UC^\sim A)^* & = (R^\sim UC^\sim CU^\sim R)^*\\
& = (R^\sim UU^\sim R)^*\\
& = (R^\sim R)^*\\
& = R^\sim R,
\end{align*}
where the last equality is condition \ref{ITEM:MP4}.  Thus, $R^\sim UC^\sim$ satisfies condition \ref{ITEM:MP4}.

\end{proof}

\subsection{Drazin Inverses}

Note that pseudoskeleton decompositions need not be valid for Drazin inverses, which are defined only for square matrices. A matrix $A^D\in\F^{n\times n}$ is a Drazin inverse of $A\in\F^{n\times n}$ if $A^DAA^D = A^D$, $AA^D = A^DA$, and $A^{k+1}A^D = A^k$ for some $k\in\N$ \cite{drazin1958pseudo}. Drazin inverses need not satisfy $AA^DA=A$ (i.e., need not be generalized inverses).  Consequently, the implication $\eqref{ITEM:Generalized:rankUA}\Rightarrow\eqref{ITEM:Generalized:ACUR}$ of Theorem \ref{THM:GeneralizedCUR} need not hold when $C=R=A$ and  $U^\sim=A^\sim$ is replaced with $U^D=A^D$.

\section{Generalized Tensor CUR Decompositions}\label{SEC:Tensor}

We now turn our attention to generalized tensor pseudoskeleton decompositions.  Interestingly, for tensors, there are more than one generalization of pseudoskeleton decompositions. Here, we first consider tensor decompositions in modewise product form similar to Tucker decompositions \cite{tucker1966}, which are of the form $\mathcal{T} = \mathcal{X}\times_1 W_1\times_2\dots\times_n W_n,$ where $\mathcal{X}$ is some core tensor, and the $W_i$ are so-called factor matrices. When restrictions are placed on the factor matrices $W_i$, other names may be applied; for instance, when $\F=\R$ or $\C$ and the $W_i$ are assumed to be orthogonal, then this is often called the Higher-Order SVD (HOSVD). For a thorough review of tensors and their decompositions, see \cite{kolda2009tensor} (see also \cite{ZOIA2018}).

\subsection{Tensor preliminaries}

Before proceeding with our discussion of generalized tensor pseudoskeleton decompositions, we pause to (briefly) collect some notation and basics of tensors. Tensors will be represented in calligraphic font, and an $n$-mode tensor over a field $\F$ is one with entries in $\F^{d_1\times\cdots\times d_n}$. Tensors can be unfolded along any mode ($i\in[n]$) into a matrix denoted $\mathcal{T}_{(i)}\in\F^{d_i\times \prod_{j\neq i}d_j}$. Tensors can be multiplied by matrices along a given mode ($i\in[n]$) and one has the equivalence \cite{kolda2009tensor}
\[(\mathcal{T}\times_i Y)_{(i)} = Y\mathcal{T}_{(i)},\]
where $\mathcal{T}\in\F^{d_1\times\cdots\times d_n}$ and $Y\in\F^{m\times d_i}$. The product $\mathcal{T}\times_i Y$ is a tensor of size $d_1\times \cdots \times d_{i-1}\times m\times d_{i+1}\times\cdots\times d_n$. One can perform successive modewise multiplication, and we note that $(\mathcal{T}\times_i Y)\times_j Z = (\mathcal{T}\times_j Z)\times_i Y$ \cite{ZOIA2018}, so we will use $\mathcal{T} = \mathcal{X}\times_{i=1}^n W_i$ to denote Tucker decompositions and similar successive modewise products.

The Kronecker product of matrices $A\in\F^{m\times n}$ and $B\in\F^{t\times s}$ is the matrix $A\otimes B\in\F^{mt\times ns}$ given by
\[A\otimes B := \begin{bmatrix} A_{11}B & \cdots & A_{1n}B\\ \vdots & \ddots & \vdots\\ A_{m1}B & \cdots & A_{mn}B \end{bmatrix}.\]

The notion of rank of a tensor is not uniquely defined for $n\geq3$. Here, we will only make use of multilinear rank. A tensor $\T\in{d_1\times\cdots\times d_n}$ is said to have multilinear rank $(r_1,\dots,r_n)$ if $\rank(\mathcal{T}_{(i)})=r_i$ for all $i=1,\dots,n$. Some works call this the Tucker rank of $\T$.

In \cite{CHHNjmlr}, two tensor decompositions are discussed -- Fiber and Chidori CUR decompositions. Chidori decompositions are the most direct extension of matrix CUR decompositions in that, given an $n$--mode tensor $\mathcal{T}\in\F^{d_1\times\dots\times d_n}$, one samples indices $I_j\subset[d_j]$ to form a core subtensor $\mathcal{R} = \mathcal{T}(I_1,\cdots,I_n)$. One then extrudes this subtensor out in all directions but one to yield the remaining constituent subtensors for the reconstruction. That is, the subtensors used in the reconstruction of $\mathcal{T}$ are $\mathcal{R}$ (taking the place of $U$ in the matrix case) and $\mathcal{C}_i = \mathcal{T}(I_1,\cdots,I_{i-1},:,I_{i+1},\cdots,I_n)$ (taking the place of the column submatrices). We write this in matrix form as $C_i = \mathcal{T}_{(i)}(:,\otimes_{j\neq i}I_j)$, where $\otimes_{j\neq i}$ denotes the mapping of the indices when unfolding $\mathcal{C}_i$ along the $i$-th mode. We also will utilize $U_i=C_i(I_i,:)$, which is similar to the row submatrix $R$ in the matrix setting.



For fiber CUR decompositions, one chooses $I_j\subset[d_j]$ and $J_i\subset \left[\prod_{j\neq i}d_j\right]$ and sets $\mathcal{R} = \mathcal{T}(I_1,\cdots,I_n)$ as before, but $C_i = \mathcal{T}_{(i)}(:,J_i)$ and $U_i = C_i(I_i,:)$. The main difference here is that in contrast with chidori CUR decompositions, one is allowed to choose fibers in each direction that may not intersect with the core subtensor $\mathcal{R}$ (i.e., the indices $J_i$ need not be derived from the indices $\{I_j\}_{j=1}^n$).

In either case, the tensor reconstruction (if possible) is 
\[\mathcal{T} = \mathcal{R}\times_1(C_1U_1^\dagger)\times_2\cdots\times_n(C_nU_n^\dagger).\]
For convenience, we will sometimes simplify this notation in the sequel to $\mathcal{R}\times_{i=1}^n(C_iU_i^\dagger)$. 

For a more extended description of tensor pseudoskeleton decompositions and similar approaches as well as algorithmic considerations, the reader is invited to consult \cite{ahmadi2021cross}. For figures describing the fiber and chidori CUR decompositions, see \cite{CHHNjmlr}.

\subsection{Characterizations for generalized fiber and chidori decompositions}

Here, we present two characterizations of generalized fiber and chidori decompositions described above to the case of generalized inverses and arbitrary fields. The fiber decomposition characterization is the same as it is for the real and complex case but for generalized inverses. However, we are unable to prove that one of the conditions for chidori decompositions is equivalent except in the case $\F=\R$ or $\C$ because the standard proof of existence of Tucker decompositions holds only for these fields. This brings up an interesting question: can the decomposition in \eqref{EQN:GeneralizedInverse} be extended to a Tucker-like decomposition for tensors over arbitrary fields? At present we do not have an answer to this, so we proceed with the characterizations below.

\begin{theorem}\label{THM:Fiber}
Let $\mathcal{T}\in\mathbb{F}^{d_1\times\cdots\times d_n}$ have multilinear rank $(r_1,\dots,r_n)$. Let $I_i\subset [d_i]$ and $J_i\subset[\prod_{j\neq i}d_j]$. Set $\mathcal{R}=\mathcal{T}(I_1,\cdots,I_n)$, $C_i=\mathcal{T}_{(i)}(:,J_i)$ and $U_i=C_i(I_i,:)$. 
Then the following statements are equivalent
\begin{enumerate}[(i)]
    \item\label{ITEM:FiberRankU} $\rank(U_i)=r_i$ for all $i=1,\dots,n$
    \item\label{ITEM:FiberDecomp} $\mathcal{T}=\mathcal{R}\times_{i=1}^nC_iU_i^\sim,$ for some $U^\sim_1,\dots,U^\sim_n$
    \item\label{ITEM:FiberDecompall} $\mathcal{T}=\mathcal{R}\times_{i=1}^nC_iU_i^\sim,$ for every $U^\sim_1,\dots,U^\sim_n$
    \item\label{ITEM:FiberMultiRank} $\rank(C_i)=r_i$ for all $i$ and the multilinear rank of $\mathcal{R}=(r_1,\dots,r_n)$.
\end{enumerate}
Moreover, if any of the equivalent statements above hold, then $\mathcal{T}=\mathcal{T}\times_{i=1}^n (C_iC_i^\sim)$ for every $C_i^\sim$. 
\end{theorem}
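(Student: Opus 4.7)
The plan is to prove the equivalences cyclically as $(iv)\Rightarrow(i)\Rightarrow(iii)\Rightarrow(ii)\Rightarrow(iv)$, with $(iii)\Rightarrow(ii)$ trivial and the substantive work residing in $(i)\Rightarrow(iii)$; the moreover statement then drops out of $(i)$ combined with Theorem \ref{THM:Projectors}.

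For $(iv)\Rightarrow(i)$, I would argue purely with ranks. Observe that $\mathcal{R}_{(i)}$ is a column submatrix of $R_i:=\mathcal{T}_{(i)}(I_i,:)$ obtained by retaining only the columns indexed by $\otimes_{j\neq i}I_j$, so $r_i = \rank(\mathcal{R}_{(i)}) \leq \rank(R_i) \leq r_i$. Together with the hypothesis $\rank(C_i)=r_i$, the mode-$i$ unfolding $\mathcal{T}_{(i)}$ with row indices $I_i$ and column indices $J_i$ satisfies condition \eqref{ITEM:Generalized:rankCRA} of Theorem \ref{THM:GeneralizedCUR}, forcing $\rank(U_i)=r_i$. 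Dually, $(ii)\Rightarrow(iv)$ follows by unfolding $\mathcal{T}=\mathcal{R}\times_{i=1}^n C_iU_i^\sim$ along mode $i$ to write $\mathcal{T}_{(i)} = (C_iU_i^\sim)\mathcal{R}_{(i)}K_i^T$ for an appropriate Kronecker tail $K_i$; rank-submultiplicativity then pins $\rank(C_i)$ and $\rank(\mathcal{R}_{(i)})$ to $r_i$, the matching upper bounds coming for free from the fact that both are submatrices of $\mathcal{T}_{(i)}$.

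The heart of the proof is $(i)\Rightarrow(iii)$. Given $\rank(U_i)=r_i$, Theorem \ref{THM:GeneralizedCUR}\eqref{ITEM:Generalized:ACURall} applied to $\mathcal{T}_{(i)}$ yields $\mathcal{T}_{(i)} = C_iU_i^\sim R_i = C_iU_i^\sim P_{I_i}\mathcal{T}_{(i)}$ for every $U_i^\sim$, where $P_{I_i}$ denotes the row-selection matrix onto $I_i$. Setting $Q_i := C_iU_i^\sim P_{I_i}$, this is the tensor identity $\mathcal{T}=\mathcal{T}\times_i Q_i$, valid for each $i$ and every $U_i^\sim$. Since modewise products along distinct modes commute, iterating gives $\mathcal{T} = \mathcal{T}\times_1 Q_1\times_2\cdots\times_n Q_n$. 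Applying the associativity rule $\mathcal{S}\times_i(AB) = (\mathcal{S}\times_i B)\times_i A$ to split each $Q_i = (C_iU_i^\sim)\cdot P_{I_i}$, and then reshuffling so that all selectors $P_{I_i}$ are applied first, collapses the nested restrictions into the subtensor $\mathcal{R} = \mathcal{T}\times_1 P_{I_1}\times_2\cdots\times_n P_{I_n}$; what remains is $\mathcal{T} = \mathcal{R}\times_{i=1}^n (C_iU_i^\sim)$ for every choice of the $U_i^\sim$, establishing $(iii)$.

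Finally, whenever any of (i)--(iv) holds we have $\rank(C_i)=r_i=\rank(\mathcal{T}_{(i)})$, so $\Col(C_i)=\Col(\mathcal{T}_{(i)})$, and Theorem \ref{THM:Projectors} guarantees that $C_iC_i^\sim$ is an idempotent projector onto that column space for every $C_i^\sim$; hence $C_iC_i^\sim\mathcal{T}_{(i)} = \mathcal{T}_{(i)}$, which is the tensor identity $\mathcal{T}\times_i(C_iC_i^\sim)=\mathcal{T}$, and applying it along each mode yields the moreover claim. The main obstacle I anticipate is the modewise bookkeeping in $(i)\Rightarrow(iii)$: each $Q_i$ must be separated into its restriction part $P_{I_i}$ and its lifting part $C_iU_i^\sim$, one must carefully use that restrictions on different modes commute both with each other and with liftings on other modes, and one must verify that composing all the $P_{I_j}$'s along every mode produces exactly the subtensor $\mathcal{R}$ rather than some larger intermediate object.
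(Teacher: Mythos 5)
Your proposal is correct and takes essentially the same approach as the paper: every implication is reduced to Theorem \ref{THM:GeneralizedCUR} applied to the mode-$i$ unfoldings, and your key step \eqref{ITEM:FiberRankU}$\Rightarrow$\eqref{ITEM:FiberDecompall} --- applying $\mathcal{T}_{(i)} = C_iU_i^\sim P_{I_i}\mathcal{T}_{(i)}$ along each mode and commuting the selection matrices inward so that they collapse to $\mathcal{R}$ --- is precisely the paper's iterative construction of the subtensors $\mathcal{R}_1,\dots,\mathcal{R}_n$, just packaged via selection matrices rather than explicit Kronecker-indexed column selections. The only cosmetic differences are your cyclic ordering of the implications and your proof of the moreover statement directly from Theorem \ref{THM:Projectors} (equality of column spaces forces $C_iC_i^\sim\mathcal{T}_{(i)}=\mathcal{T}_{(i)}$), where the paper instead specializes \eqref{ITEM:FiberDecompall} to $I_i=[d_i]$; both are sound.
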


\begin{theorem}\label{THM:Chidori}
Let $\mathcal{T}\in\mathbb{F}^{d_1\times\cdots\times d_n}$ have multilinear rank $(r_1,\dots,r_n)$. Let $I_i\subset [d_i]$. Set $\mathcal{R}=\mathcal{T}(I_1,\cdots,I_n)$, $C_i=\mathcal{T}_{(i)}(:,\otimes_{j\neq i}I_j)$, and $U_i=C_i(I_i,:) = \mathcal{R}_{(i)}$. Then the following are equivalent:
\begin{enumerate}[(i)]
    \item\label{ITEM:ChidoriRankU} $\rank(U_i)=r_i$ for all $i=1,\dots,n$
    \item\label{ITEM:ChidoriDecomp} $\mathcal{T}=\mathcal{R} \times_{i=1}^n C_iU_i^\sim$ for some $U^\sim_1,\dots,U^\sim_n$
    \item\label{ITEM:ChidoriDecompall} $\mathcal{T}=\mathcal{R} \times_{i=1}^n C_iU_i^\sim$ for every $U^\sim_1,\dots,U^\sim_n$
    \item\label{ITEM:ChidoriRankR} the multilinear rank of $\mathcal{R}$ is $(r_1,\dots,r_n)$.
\end{enumerate}
If any of the equivalent statements above hold, then $\mathcal{T}=\mathcal{T}\times_{i=1}^n (C_iC_i^\sim)$ for every $C_i^\sim$.

Moreover, if any of the equivalent statements above hold, then
\begin{enumerate}[(i)]
\addtocounter{enumi}{4}
\item\label{ITEM:ChidoriRankTi}   $\rank(\mathcal{T}_{(i)}(I_i,:))=r_i$ for all $i=1,\dots,n$.
\end{enumerate}
If $\mathbb{F} = \R$ or $\C$, then \eqref{ITEM:ChidoriRankTi} is equivalent to the conditions above.
\end{theorem}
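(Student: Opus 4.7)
The plan is to reduce each equivalence to Theorem \ref{THM:GeneralizedCUR} applied to the mode-$i$ unfolding $\mathcal{T}_{(i)}$. The key observation is that $U_i$ is the submatrix of $\mathcal{T}_{(i)}$ obtained by selecting rows $I_i$ and columns $\otimes_{j\neq i}I_j$, and $\rank(\mathcal{T}_{(i)}) = r_i$ by the multilinear rank hypothesis. Hence $\rank(U_i) = r_i$ is exactly the rank condition required by Theorem \ref{THM:GeneralizedCUR} at each mode. The equivalence \eqref{ITEM:ChidoriRankU} $\Leftrightarrow$ \eqref{ITEM:ChidoriRankR} is then immediate since $U_i = \mathcal{R}_{(i)}$.

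For \eqref{ITEM:ChidoriRankU} $\Rightarrow$ \eqref{ITEM:ChidoriDecompall}, I would introduce row-selection matrices $S_{I_i}$ and write $\mathcal{R} = \mathcal{T} \times_{i=1}^n S_{I_i}$, so that
\[\mathcal{R}\times_{i=1}^n (C_iU_i^\sim) = \mathcal{T}\times_{i=1}^n (C_iU_i^\sim S_{I_i}).\]
At each mode $i$, Theorem \ref{THM:GeneralizedCUR} applied to $\mathcal{T}_{(i)}$ gives $\mathcal{T}_{(i)} = C_iU_i^\sim \mathcal{T}_{(i)}(I_i,:) = (C_iU_i^\sim S_{I_i})\mathcal{T}_{(i)}$ for every $U_i^\sim$, so each modewise product preserves $\mathcal{T}$. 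The implication \eqref{ITEM:ChidoriDecompall} $\Rightarrow$ \eqref{ITEM:ChidoriDecomp} is trivial; for \eqref{ITEM:ChidoriDecomp} $\Rightarrow$ \eqref{ITEM:ChidoriRankU}, unfolding the decomposition along mode $i$ and using $\mathcal{R}_{(i)} = U_i$ together with a submatrix rank bound gives $r_i = \rank(\mathcal{T}_{(i)}) \leq \rank(U_i) \leq r_i$. The ``moreover'' identity $\mathcal{T} = \mathcal{T}\times_{i=1}^n C_iC_i^\sim$ then follows by noting that $\rank(U_i) \leq \rank(C_i) \leq r_i$ forces $\Col(C_i) = \Col(\mathcal{T}_{(i)})$, and applying Lemma \ref{LEM:Projections}(i) modewise.

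Condition \eqref{ITEM:ChidoriRankTi} is forced by any equivalent condition, since $U_i$ is itself a column submatrix of $\mathcal{T}_{(i)}(I_i,:)$, yielding $r_i = \rank(U_i) \leq \rank(\mathcal{T}_{(i)}(I_i,:)) \leq r_i$. For the converse when $\F = \R$ or $\C$, the plan is to invoke the HOSVD $\mathcal{T} = \mathcal{G}\times_{j=1}^n W_j$ with each $W_j$ of full column rank $r_j$ and $\rank(\mathcal{G}_{(j)}) = r_j$. Then
\[\mathcal{T}_{(i)}(I_i,:) = W_i(I_i,:)\,\mathcal{G}_{(i)}\Bigl(\bigotimes_{j\neq i}W_j\Bigr)^T,\]
where the Kronecker-product factor has full row rank $\prod_{j\neq i}r_j$ and $\mathcal{G}_{(i)}$ has full row rank $r_i$, so $\rank(\mathcal{T}_{(i)}(I_i,:)) = \rank(W_i(I_i,:))$. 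Thus \eqref{ITEM:ChidoriRankTi} forces $\rank(W_i(I_i,:)) = r_i$, and the analogous factorization $U_i = W_i(I_i,:)\,\mathcal{G}_{(i)}(\bigotimes_{j\neq i}W_j(I_j,:))^T$ yields $\rank(U_i) = r_i$ by the same rank computation, recovering \eqref{ITEM:ChidoriRankU}.

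The main obstacle is this final implication: it requires a Tucker-type factorization of $\mathcal{T}$ whose factors have full column rank, a tool guaranteed by the HOSVD over $\R$ or $\C$ but not generally available over an arbitrary field. This is precisely why the equivalence with \eqref{ITEM:ChidoriRankTi} is stated only in the real or complex setting, while the remaining equivalences carry through for every $\F$ via the matrix-level Theorem \ref{THM:GeneralizedCUR}.
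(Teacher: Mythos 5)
Your proposal is correct and takes essentially the same route as the paper: the equivalences \eqref{ITEM:ChidoriRankU}--\eqref{ITEM:ChidoriRankR} are reduced to the matrix-level Theorem \ref{THM:GeneralizedCUR} applied to each unfolding $\mathcal{T}_{(i)}$ (the paper packages this as a special case of Theorem \ref{THM:Fiber}, whose proof is the same iteration you perform with the selection matrices $S_{I_i}$), the submatrix rank chain giving \eqref{ITEM:ChidoriRankTi} is identical, and the converse over $\R$ or $\C$ goes through the same Tucker/HOSVD factorization with full-column-rank factors $W_i$ and the same intermediate conclusion $\rank(W_i(I_i,:))=r_i$. Your only departures are cosmetic: you conclude $\rank(U_i)=r_i$ from the factored form of $U_i$ by a full-column-rank/full-row-rank cancellation (right- and left-invertible factors preserve rank) where the paper uses Sylvester's rank inequality, and you prove the ``moreover'' identity directly via $\Col(C_i)=\Col(\mathcal{T}_{(i)})$ and Lemma \ref{LEM:Projections} rather than by specializing Theorem \ref{THM:Fiber} to $I_i=[d_i]$; both variants are valid.
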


\begin{proof}[Proof of Theorem \ref{THM:Fiber}]

\eqref{ITEM:FiberRankU}$\Rightarrow$\eqref{ITEM:FiberDecompall}:
The proof will follow from successive applications of the matrix case \eqref{ITEM:Generalized:rankUA}$\Rightarrow$\eqref{ITEM:Generalized:ACURall}.  

First, note that by the matrix case, \begin{equation}\label{EQN:TensorModeCUR}\mathcal{T}_{(i)} = C_iU_i^\sim \mathcal{T}_{(i)}(I_i,:) = \mathcal{T}(:,\cdots,:,I_i,:,\cdots,:)\times_i C_iU_i^\sim,\quad \textnormal{for all } i.\end{equation}  In particular, begin with $i=1$ and set $\mathcal{R}_1:= \mathcal{T}(I_1,:,\cdots,:)$ and we have \begin{equation}\label{EQN:TensorCURStep1}\mathcal{T} = \mathcal{R}_1\times_1 C_1U_1^\sim.\end{equation}

Next, we have
$(\mathcal{R}_1)_{(2)} = \left(\mathcal{T}(I_1,:,\cdots,:)\right)_{(2)} = \mathcal{T}_{(2)}(:,I_1\otimes_{j=3}^n [d_j])$.  Letting $P_{1}$ be the column selection matrix which chooses the columns indexed by $I_1\otimes_{j=3}^n[d_j]$ from $\mathcal{T}_{(2)}$, then an application of \eqref{EQN:TensorModeCUR} yields
\[(\mathcal{R}_1)_{(2)} = \mathcal{T}_{(2)}P_1 = C_2U_2^\sim \mathcal{T}_{(2)}(I_2,:)P_1\]
implying
\[\mathcal{R}_1 = \mathcal{T}(I_1,I_2,:,\cdots)\times_2 C_2U_2^\sim=: \mathcal{R}_2\times_2 C_2U_2^\sim.\] 
Therefore, $\mathcal{T} = \mathcal{R}_2\times_2 C_2U_2^\sim\times_1 C_1U_1^\sim.$

Iterating this procedure, we find a sequence of subtensors $\mathcal{R}_2,\dots,\mathcal{R}_{n}$ for which $\mathcal{R}_k = \mathcal{T}(I_1,\cdots,I_{k+1},:,\cdots)\times_{k+1}C_{k+1}U_{k+1}^\sim=:\mathcal{R}_{k+1}\times_{k+1}C_{k+1}U_{k+1}^\sim$ for all $k = 1,\dots,n-1$.  Combining this with \eqref{EQN:TensorCURStep1} yields

\[\mathcal{T} = \mathcal{T}(I_1,\cdots,I_n)\times_{i=1}^nC_iU_i^\sim = \mathcal{R}\times_{i=1}^nC_iU_i^\sim.\]

$\eqref{ITEM:FiberDecomp}\Rightarrow\eqref{ITEM:FiberMultiRank}$: First, note that $C_i$ is a submatrix of $\mathcal{T}_{(i)}$ by definition, so $\rank(C_i)\leq\rank(\mathcal{T}_{(i)})=r_i$.  Next, since $\mathcal{R}_{(i)} = \mathcal{T}_{(i)}(I_i,\otimes_{j\neq i}I_j)$, $\rank(\mathcal{R}_{(i)})\leq r_i$.  Finally, \cite{kolda2009tensor,kolda2006multilinear} shows that condition $\eqref{ITEM:FiberDecomp}$ is equivalent to 
\[\mathcal{T}_{(i)} = C_iU_i^\sim\mathcal{R}_{(i)}(C_nU_n^\sim\otimes\dots\otimes C_{i+1}U_{i+1}^\sim\otimes C_{i-1}U_{i-1}^\sim\otimes\dots\otimes C_1U_1^\sim)^*, \]
whence $r_i=\rank(\T_{(i)})\leq\min\{\rank(C_i),\rank(\mathcal{R}_{(i)})\}\leq r_i$, hence $\eqref{ITEM:FiberMultiRank}$ holds.

$\eqref{ITEM:FiberMultiRank}\Rightarrow\eqref{ITEM:FiberRankU}:$ Note that $\mathcal{R}_{(i)}$ is a submatrix of $\mathcal{T}_{(i)}(I_i,:)$, so $r_i = \rank(\mathcal{R}_{(i)})\leq\rank(\mathcal{T}_{(i)}(I_i,:))\leq\rank(\mathcal{T}_{(i)})=r_i$.  Thus $\rank(\mathcal{T}_{(i)}(I_i,:)=\rank(C_i)=r_i=\rank(\mathcal{T}_{(i)})$, so $\rank(U_i)=r_i$ by the implication $\eqref{ITEM:Generalized:rankCRA}\Rightarrow\eqref{ITEM:Generalized:rankUA}$ of Theorem \ref{THM:GeneralizedCUR}.

The moreover statement follows from condition \eqref{ITEM:FiberDecompall} with $I_i=[d_i]$.
\end{proof}

\begin{proof}[Proof of Theorem \ref{THM:Chidori}]
The implications $\eqref{ITEM:ChidoriRankU}\Leftrightarrow\eqref{ITEM:ChidoriDecomp}\Leftrightarrow\eqref{ITEM:ChidoriDecompall}$ follow as special cases of Theorem \ref{THM:Fiber}. 

$\eqref{ITEM:ChidoriRankU}\Leftrightarrow\eqref{ITEM:ChidoriRankR}$: Note that 
 \[\rank(U_i) = \rank(\mathcal{R}_{(i)}) = r_i.\]

$\eqref{ITEM:ChidoriRankR}\Rightarrow \eqref{ITEM:ChidoriRankTi}$: By assumption,
\[r_i = \rank(\mathcal{R}_{(i)}) = \rank(\mathcal{T}_{(i)}(I_i,\otimes_{j\neq i}I_j)) \leq \rank(\mathcal{T}_{(i)}) = r_i,\]
hence equality holds, yielding the claim.


Finally, we prove that $\eqref{ITEM:ChidoriRankTi}\Rightarrow\eqref{ITEM:ChidoriRankU}$ in the case $\F=\R$ or $\C$.  In this case, we use the Tucker decomposition of $\mathcal{T}$  representation which takes the form $\mathcal{T} = \mathcal{X}\times_1 W_1\times_2\cdots \times_n W_n$ such that $\rank(\mathcal{X}_{(i)})=r_i$ and $W_i\in\R^{d_i\times r_i}$ has rank $r_i$.  Letting $P_{I_i}$ denote the row selection matrix that selects rows according to the index set $I_i$, notice that unfolding the Tucker decomposition of $\mathcal{T}$ along the $i$-th mode yields
\begin{equation}\label{EQN:RowT}P_{I_i}\mathcal{T}_{(i)} = P_{I_i}W_i\mathcal{X}_{(i)}(W_1\otimes\dots\otimes W_{i-1}\otimes W_{i+1}\otimes \dots \otimes W_n).\end{equation}
The left-hand side above is $\mathcal{T}_{(i)}(I_i,:)$ which has rank $r_i$ by assumption, so we see that $r_i = \rank(P_{I_i}\mathcal{T}_{(i)}) \leq \rank(W_i(I_i,:)) \leq \rank(W_i) = r_i,$ and so $\rank(W_i(I_i,:)) = r_i.$

Next, we notice that \[\mathcal{R} = \mathcal{T}(I_1,\dots,I_n) = \mathcal{T}\times_{i=1}^nP_{I_i} = \mathcal{X}\times_{i=1}^n W_i(I_i,:).\]
One can see this by applying the row selection matrix along each mode of $\mathcal{T}$ as in \eqref{EQN:RowT}.  Thus we have that $\rank(\mathcal{R}_{(i)})\leq\rank(W_i(I_i,:))=r_i.$  On the other hand, by repeated application of Sylvester's rank inequality, we have
\begin{align*}
    \rank(\mathcal{R}_{(i)}) & \geq \rank(W_i(I_i,:)\mathcal{X}_{(i)}(\otimes_{j\neq i}W_j)^*)\\
    & \geq \rank(W_i(I_i,:)\mathcal{X}_{(i)})+\rank(\otimes_{j\neq i}W_i)-\prod_{j\neq i} r_j\\
    & = r_i+\prod_{j\neq i}r_j - \prod_{j\neq i}r_j\\
    & = r_i.
\end{align*}
The first equality follows from the fact that $W_i(I_i,:)$ has full row rank and $\mathcal{X}_{(i)}$ has full column rank, both of which are $r_i$, and also that $\rank(A\otimes B) = \rank(A)\rank(B)$. Consequently, the multilinear rank of $\mathcal{R}$ is $(r_1,\dots,r_n)$, and the proof is complete.
\end{proof}

We note that the proof above of $\eqref{ITEM:ChidoriRankTi}\Rightarrow\eqref{ITEM:ChidoriRankU}$ requires the existence of a Tucker decomposition with orthogonal factor matrices. We only know this result for $\F=\R$ or $\C$, and do not know if it generalizes to arbitrary fields. The standard proof of existence of such a decomposition does not directly generalize to arbitrary fields as the factor matrices $W_i$ are required to be orthogonal, which may not be the case for arbitrary fields.

However, our proof above of $\eqref{ITEM:ChidoriRankU}\Leftrightarrow\eqref{ITEM:ChidoriRankR}$ does not go through the Tucker decomposition of $\mathcal{T}$ as does the proof of the similar fact for Moore--Penrose pseudoinverses in \cite{CHHNjmlr}, so our proof technique here allows for a more general result which holds for arbitrary fields and generalized inverses.

\subsection{Other tensor CUR decompositions}

There are other tensor variants of CUR decompositions, but most have been studied only for 3-mode tensors. Perhaps the most natural is that based on the t-SVD, which is formed via multiplying block circulant matrices obtained from slabs of 3-tensors. Here, we briefly illustrate how one can obtain characterizations of t-CUR decompositions for 3-mode tensors from our previous analysis. However, we note that the Fiber and Chidori decompositions characterized above are more natural extensions of pseudoskeleton approximations to higher order tensors, as the t-SVD construction is essentially limited to 3-mode tensors.  Our treatment here follows \cite{wang2017missing} (see also \cite{chen2022tensor}). Since the analysis depends only on using matrix CUR decompositions to extend the results, we are deliberately brief in our proof of the extension to generalized inverses.

In this section, we restrict to $\F=\R$ or $\C$. Given $\mathcal{T}\in\F^{m\times n\times \ell}$, one can define the associated block circulant matrix obtained from the mode-3 slabs of $\mathcal{T}$, which we denote $\mathcal{T}_{i}$ ($=\mathcal{T}(:,:,i)$) for simplicity, via
\[\textnormal{bcirc}(\mathcal{T}):=\begin{bmatrix} \mathcal{T}_1 & \mathcal{T}_{\ell} & \cdots & \mathcal{T}_2\\
\mathcal{T}_2 & \mathcal{T}_1 & \cdots & \mathcal{T}_3\\
\vdots & \vdots & \ddots & \vdots\\
\mathcal{T}_{\ell} & \mathcal{T}_{\ell-1} & \cdots & \mathcal{T}_1\end{bmatrix} \in \F^{m\ell\times n\ell}.\]
For the purpose of this section, we will utilize a slight modification of the unfolding of a matrix along its second mode. Rather than stacking the mode-3 slabs horizontally, we will stack them vertically, so that we define
\[\mathcal{T}_{[2]}:= \begin{bmatrix} \mathcal{T}_1\\ \vdots \\ \mathcal{T}_{\ell}\end{bmatrix}\in\F^{m\ell\times n}.\] 

Then the t-product of tensors $\mathcal{T}\in\F^{m\times n\times \ell}$ and $\mathcal{S}\in\F^{n\times k\times \ell}$ is the tensor of dimension $m\times k\times \ell$ obtained via circular convolution. That is, if $F_\ell$ is the $\ell\times\ell$ DFT matrix, then the circular DFT of $\mathcal{T}$ is the block diagonal matrix
\begin{equation}\label{EQN:That}\widehat{\T}:= (F_\ell\otimes I_{m\times m})\textnormal{bcirc}(\mathcal{T})(F_\ell^*\otimes I_{n\times n}) = \diag\left(\widehat\T_1,\dots,\widehat\T_{\ell}\right).\end{equation}
Then the t-product of $\T$ with $\mathcal{S}$ is denoted $\T\ast\mathcal{S}$ and given by
\[(\T\ast\mathcal{S})_{[2]} := \widehat\T \mathcal{S}_{[2]}.\]

The following lemma is well-known and straightforward to prove via basic properties of the DFT matrix.
\begin{lemma}\label{LEM:FTProduct}
Let $\F=\R$ or $\C$. Let $\mathcal{T}\in\F^{m\times n\times\ell}$ and $\mathcal{S}\in\F^{n\times k\times\ell}$. Then $\T\ast\mathcal{S} = \mathcal{H}$ if and only if $\widehat\T\widehat{\mathcal{S}}=\widehat{\mathcal{H}}$.
\end{lemma}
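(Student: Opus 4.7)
The plan is to reduce the equivalence to two structural facts about the block circulant construction: that $\bcirc(\cdot)$ is injective on tensors of a given size (the original tensor can be read off from the first block column of $\bcirc$), and that it turns the t-product into ordinary matrix multiplication, i.e. $\bcirc(\T\ast\mathcal{S}) = \bcirc(\T)\bcirc(\mathcal{S})$. The second is the defining algebraic property of the t-product: verifying that the first block column of $\bcirc(\T)\bcirc(\mathcal{S})$ equals $(\T\ast\mathcal{S})_{[2]}$ is direct from the definitions, and the remaining block columns are the correct cyclic shifts because block circulants are closed under multiplication. Together these show that $\T\ast\mathcal{S} = \mathcal{H}$ is equivalent to $\bcirc(\T)\bcirc(\mathcal{S}) = \bcirc(\mathcal{H})$.

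With these two facts established, I would transport the identity into the frequency domain by conjugating both sides by the block DFT. Using $(F_\ell\otimes I)(F_\ell^*\otimes I) = I$ and inserting this identity between the two factors, one obtains
\[
\widehat{\T\ast\mathcal{S}} = (F_\ell\otimes I)\bcirc(\T)\bcirc(\mathcal{S})(F_\ell^*\otimes I) = (F_\ell\otimes I)\bcirc(\T)(F_\ell^*\otimes I)(F_\ell\otimes I)\bcirc(\mathcal{S})(F_\ell^*\otimes I) = \widehat{\T}\,\widehat{\mathcal{S}}.
\]
Thus DFT conjugation is multiplicative on block circulants, sending the product $\bcirc(\T)\bcirc(\mathcal{S})$ to the product of the corresponding block diagonals $\widehat{\T}\,\widehat{\mathcal{S}}$.

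To conclude, combine the identity $\widehat{\T\ast\mathcal{S}} = \widehat\T\widehat{\mathcal{S}}$ with the bijectivity of $\mathcal{X}\mapsto\widehat{\mathcal{X}}$, which follows from the invertibility of $F_\ell$ together with the injectivity of $\bcirc$. The forward implication is immediate by substitution. Conversely, if $\widehat\T\widehat{\mathcal{S}} = \widehat{\mathcal{H}}$, then $\widehat{\T\ast\mathcal{S}} = \widehat{\mathcal{H}}$, and undoing the DFT conjugation recovers $\bcirc(\T\ast\mathcal{S}) = \bcirc(\mathcal{H})$, whence $\T\ast\mathcal{S} = \mathcal{H}$. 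The only substantive step is the bcirc-multiplicativity $\bcirc(\T\ast\mathcal{S}) = \bcirc(\T)\bcirc(\mathcal{S})$, which is essentially the reason the t-product is defined the way it is; everything else is routine bookkeeping with invertible DFT matrices.
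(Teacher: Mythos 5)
Your proposal is correct and takes essentially the approach the paper intends: the paper gives no written proof, remarking only that the lemma is ``well-known and straightforward to prove via basic properties of the DFT matrix,'' and your argument supplies exactly those details (multiplicativity of $\bcirc$ under the t-product, diagonalization of block circulants by conjugation with $F_\ell\otimes I$, and injectivity of $\bcirc$ to pass back from matrices to tensors). The one point worth flagging is that your key identity $\bcirc(\T\ast\mathcal{S})=\bcirc(\T)\bcirc(\mathcal{S})$ tacitly uses the standard definition $(\T\ast\mathcal{S})_{[2]}=\bcirc(\T)\,\mathcal{S}_{[2]}$, which is evidently what the paper's displayed definition $(\T\ast\mathcal{S})_{[2]}=\widehat{\T}\,\mathcal{S}_{[2]}$ intends (read literally, with $\widehat{\T}$ in place of $\bcirc(\T)$, the multiplicativity step and the lemma itself would not go through), so you have silently corrected an apparent typo rather than introduced a gap.
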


From this, we directly obtain the following straightforward proposition.
\begin{proposition}\label{PROP:CWRtprod}
Let $\F=\R$ or $\C$. Let $\mathcal{T}\in\F^{m\times n\times\ell}$, and let $\mathcal{C} = \T(:,J,:)\in\F^{m\times|J|\times\ell}$ and $\mathcal{R}=\T(I,:,:)\in\F^{|I|\times n\times\ell}$. Then for $\mathcal{W}\in\F^{|J|\times|I|\times\ell}$, we have $\T = \mathcal{C}\ast\mathcal{W}\ast\mathcal{R}$ if and only if $\widehat{\T}_i =\widehat{\mathcal{C}}_i\widehat{\mathcal{W}}_i\widehat{\mathcal{R}}_i$ for all $i.$
\end{proposition}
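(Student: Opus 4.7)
The plan is to apply Lemma \ref{LEM:FTProduct} twice and then exploit the block diagonal structure of the circular DFTs given in \eqref{EQN:That}. The whole statement really just repackages the lemma for a triple product, so the proof should be short.

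First I would break the triple t-product into two binary ones using associativity: set $\mathcal{H} := \mathcal{C}\ast\mathcal{W} \in \F^{m\times |I|\times \ell}$ so that $\mathcal{C}\ast\mathcal{W}\ast\mathcal{R} = \mathcal{H}\ast\mathcal{R}$. By Lemma \ref{LEM:FTProduct}, the identity $\T = \mathcal{H}\ast\mathcal{R}$ is equivalent to $\widehat{\T} = \widehat{\mathcal{H}}\,\widehat{\mathcal{R}}$, and a second application to $\mathcal{H} = \mathcal{C}\ast\mathcal{W}$ gives $\widehat{\mathcal{H}} = \widehat{\mathcal{C}}\,\widehat{\mathcal{W}}$. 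Chaining these, $\T = \mathcal{C}\ast\mathcal{W}\ast\mathcal{R}$ if and only if $\widehat{\T} = \widehat{\mathcal{C}}\,\widehat{\mathcal{W}}\,\widehat{\mathcal{R}}$.

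Second, I would invoke \eqref{EQN:That} to note that $\widehat{\T}$, $\widehat{\mathcal{C}}$, $\widehat{\mathcal{W}}$, and $\widehat{\mathcal{R}}$ are all block diagonal with $\ell$ blocks of sizes $m\times n$, $m\times|J|$, $|J|\times|I|$, and $|I|\times n$, respectively. The block sizes are conformable, so the product $\widehat{\mathcal{C}}\,\widehat{\mathcal{W}}\,\widehat{\mathcal{R}}$ is itself block diagonal with $i$-th block equal to $\widehat{\mathcal{C}}_i\widehat{\mathcal{W}}_i\widehat{\mathcal{R}}_i$. Thus the matrix identity $\widehat{\T} = \widehat{\mathcal{C}}\,\widehat{\mathcal{W}}\,\widehat{\mathcal{R}}$ holds if and only if it holds block by block, giving $\widehat{\T}_i = \widehat{\mathcal{C}}_i\widehat{\mathcal{W}}_i\widehat{\mathcal{R}}_i$ for every $i = 1,\dots,\ell$. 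Stringing the two equivalences together yields the proposition.

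There is essentially no obstacle here; the only subtlety worth a sentence is that associativity of the t-product is implicitly used when reducing the triple product to two binary products, but this is immediate from the block circulant definition (or equivalently from associativity of matrix multiplication after passing to the Fourier domain). I would not spell it out in detail.
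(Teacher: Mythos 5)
Your proof is correct and matches the paper's approach: the paper's proof is the one-line ``Combine Lemma \ref{LEM:FTProduct} with the representation \eqref{EQN:That},'' and your two steps (applying the lemma to the t-products via associativity, then matching blocks in the block diagonal Fourier-domain identity) are exactly the details that one-liner compresses. Nothing is missing.
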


\begin{proof}
Combine Lemma \ref{LEM:FTProduct} with the representation \eqref{EQN:That}.
\end{proof}

Now let us define generalized inverses for tensors. Given $\mathcal{T}\in\F^{m\times n\times \ell}$, we define $\mathcal{T}^\sim$ via its Fourier transform as
\[\widehat{\mathcal{T}^\sim} = \diag(\widehat{\T}_1^\sim,\dots,\widehat{\T}_\ell^\sim)\]
where $\widehat{\mathcal{T}}_i^\sim$ is any generalized inverse of $\widehat\T_i$. In the terminology of Behera et al.~\cite{behera2022computation}, $\T^\sim$ is an \textit{inner inverse} of $\T$, i.e., it satisfies $\T\ast\T^\sim\ast\T = \T$. This can be seen from the corresponding matrix definition of generalized inverse applied along with Lemma \ref{LEM:FTProduct}.  With these results in hand, we can directly prove the following simply by appealing to the generalized matrix pseudoskeleton decomposition theorem (Theorem \ref{THM:GeneralizedCUR}).

\begin{corollary}
Let $\F=\R$ or $\C$. Let $\mathcal{T}\in\F^{m\times n\times\ell}$ have multilinear rank $(r_1,r_2,r_3)$, and let $\mathcal{C} = \T(:,J,:)\in\F^{m\times|J|\times\ell}$ and $\mathcal{R}=\T(I,:,:)\in\F^{|I|\times n\times\ell}$, and $\mathcal{U}=\mathcal{T}(I,J,:)$.  The following are equivalent:
\begin{enumerate}[(i)]
    \item $\rank(\mathcal{U}_i)=r_i$ for all $i$
    \item $\rank(\mathcal{C}_i)=\rank(\mathcal{R}_i)=r_i$ for all $i$
    \item $\mathcal{T}=\mathcal{C}\ast\mathcal{U}^\sim\ast\mathcal{R}$ for some generalized inverse $\mathcal{U}^\sim$
    \item $\mathcal{T}=\mathcal{C}\ast\mathcal{U}^\sim\ast\mathcal{R}$ for all generalized inverse $\mathcal{U}^\sim$
    \item $\T=\mathcal{C}\ast\mathcal{C}^\sim\ast\mathcal{T}\ast\mathcal{R}^\sim\ast\mathcal{R}$ for some generalized inverses $\mathcal{C}^\sim$ and $\mathcal{R}^\sim$
    \item $\T=\mathcal{C}\ast\mathcal{C}^\sim\ast\mathcal{T}\ast\mathcal{R}^\sim\ast\mathcal{R}$ for all generalized inverses $\mathcal{C}^\sim$ and $\mathcal{R}^\sim$
    \item For some $\mathcal{C}^\sim$ and $\mathcal{R}^\sim$, $\mathcal{R}^\sim\ast\mathcal{U}\ast\mathcal{C}^\sim$ is a generalized inverse of $\T$
    \item For all $\mathcal{C}^\sim$ and $\mathcal{R}^\sim$, $\mathcal{R}^\sim\ast\mathcal{U}\ast\mathcal{C}^\sim$ is a generalized inverse of $\T$.
\end{enumerate}
\end{corollary}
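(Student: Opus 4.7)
The plan is to pass to the Fourier domain, where the statement decouples into $\ell$ independent matrix pseudoskeleton statements --- one for each block $\widehat{\T}_i$ --- and then invoke Theorem \ref{THM:GeneralizedCUR} on each block. By \eqref{EQN:That}, $\bcirc(\T)$ is similar, via the unitary $F_\ell\otimes I$, to the block-diagonal matrix $\diag(\widehat{\T}_1,\dots,\widehat{\T}_\ell)$, and the submatrix operations of selecting columns indexed by $J$ or rows indexed by $I$ commute with this block-diagonalization. Consequently $\widehat{\mathcal{C}}_i$, $\widehat{\mathcal{R}}_i$, and $\widehat{\mathcal{U}}_i$ are precisely the column, row, and intersection submatrices of $\widehat{\T}_i$ associated with $J$ and $I$.

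First I would spell out the dictionary. Proposition \ref{PROP:CWRtprod} together with Lemma \ref{LEM:FTProduct} shows that any t-product equation --- such as $\T=\mathcal{C}\ast\mathcal{U}^\sim\ast\mathcal{R}$, $\T=\mathcal{C}\ast\mathcal{C}^\sim\ast\T\ast\mathcal{R}^\sim\ast\mathcal{R}$, or the inner-inverse identity $\T\ast(\mathcal{R}^\sim\ast\mathcal{U}\ast\mathcal{C}^\sim)\ast\T=\T$ --- holds if and only if the corresponding matrix identity holds between the Fourier blocks for every $i\in[\ell]$. Because the tensor inner inverse is defined blockwise so that $\widehat{\T}_i^\sim$ is an arbitrary matrix generalized inverse of $\widehat{\T}_i$, the quantifier ``for some (resp.\ for all) $\T^\sim$'' corresponds exactly to ``for every $i$, for some (resp.\ for all) generalized inverse of $\widehat{\T}_i$.'' Hence conditions (iii)--(viii) of the corollary become the blockwise conjunctions of conditions \eqref{ITEM:Generalized:ACUR}--\eqref{ITEM:Generalized:RUCall} of Theorem \ref{THM:GeneralizedCUR} applied to each $\widehat{\T}_i$.

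Next, interpreting conditions (i) and (ii) of the corollary as $\rank(\widehat{\mathcal{U}}_i)=\rank(\widehat{\T}_i)$ and $\rank(\widehat{\mathcal{C}}_i)=\rank(\widehat{\mathcal{R}}_i)=\rank(\widehat{\T}_i)$ for every $i$ (with $r_i$ playing the role of $\rank(\widehat{\T}_i)$), these become the blockwise versions of Theorem \ref{THM:GeneralizedCUR}\eqref{ITEM:Generalized:rankUA}--\eqref{ITEM:Generalized:rankCRA}. Since Theorem \ref{THM:GeneralizedCUR} asserts the pairwise equivalence of all eight matrix conditions for each $\widehat{\T}_i$, conjoining over $i=1,\dots,\ell$ yields the desired equivalences at the tensor level.

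The only real care in the proof is tracking that the quantifiers ``for some $\T^\sim$'' and ``for all $\T^\sim$'' at the tensor level correspond to the matching blockwise quantifiers, which is immediate from the block-diagonal definition of the tensor inner inverse. Once this dictionary, together with the commutation of submatrix selection with block-diagonalization, is in place, there is no substantive obstacle: the corollary reduces to applying Theorem \ref{THM:GeneralizedCUR} separately on each $\widehat{\T}_i$ and reassembling the blockwise conclusions via Proposition \ref{PROP:CWRtprod}.
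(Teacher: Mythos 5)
Your proposal is correct and is essentially the paper's own proof, which consists precisely of combining Proposition \ref{PROP:CWRtprod} with Theorem \ref{THM:GeneralizedCUR} applied blockwise to the Fourier blocks $\widehat{\T}_i$. You simply make explicit the bookkeeping the paper leaves implicit --- that column/row selection commutes with the block-diagonalization \eqref{EQN:That} and that the ``for some/for all'' quantifiers on tensor inner inverses correspond exactly to the blockwise quantifiers on matrix generalized inverses.
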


\begin{proof}
Combine Proposition \ref{PROP:CWRtprod} with Theorem \ref{THM:GeneralizedCUR}.
\end{proof}

We note that one can also obtain the analogue of Theorem \ref{THM:ijk} directly via the techniques above. The decomposition $\mathcal{T} = \mathcal{C}\ast\mathcal{U}^\dagger\ast\mathcal{R}$ was called the t-CUR decomposition in \cite{wang2017missing}.

We conclude this section with a general remark that any tensor factorization that is obtained directly from matrix CUR decompositions can be shown to work in the generalized inverse case by appealing to Theorem \ref{THM:GeneralizedCUR}, so while we illustrated this here on fiber and chidori CUR and t-CUR, the technique generalizes in a straightforward manner.

\section*{Acknowledgements}

The latter part of the writing of this manuscript was done while the author was a visitor to the Fields Institute for Research in Mathematical Sciences. The author was partially supported by the Fields Institute while there. The author thanks the institute for their hospitality. The author also thanks Longxiu Huang for helpful discussions related to the manuscript.

\bibliographystyle{plain}
\bibliography{references}

\end{document}